\newcommand{\cF}{\mathcal{F}}
\newcommand{\cG}{\mathcal{G}}
\newcommand{\cL}{\mathcal{L}}
\newcommand{\cP}{\mathcal{P}}
\newcommand{\floor}[1]{\left\lfloor {#1} \right\rfloor}
\DeclareMathOperator{\E}{E}
\DeclareMathOperator{\Var}{Var}
\DeclareMathOperator{\Uniform}{Uniform}
\newtheorem{theorem}{Theorem}
\newtheorem{conjecture}{Conjecture}
\newtheorem{prop}{Proposition}
\newtheorem{lemma}{Lemma}[section]
\title{Hamiltonian increasing paths in random edge orderings}
\author{Mikhail Lavrov}
\address{Department of Mathematical Sciences, Carnegie Mellon University, Pittsburgh, PA 15213}
\email{mlavrov@andrew.cmu.edu}
\author{Po-shen Loh}
\email{ploh@cmu.edu}
\thanks{Research supported in part by NSF grant DMS-1201380, an NSA Young Investigators Grant, and by a USA-Israel BSF Grant.}
\begin{document}

\begin{abstract}
  Let $f$ be an edge ordering of $K_n$: a bijection $E(K_n) \to
  \{1,2,\dots, {n\choose 2}\}$. For an edge $e \in E(K_n)$, we call $f(e)$
  the label of $e$. An \emph{increasing path} in $K_n$ is a simple path
  (visiting each vertex at most once) such that the label on
  each edge is greater than the label on the previous edge. We let $S(f)$
  be the number of edges in the longest increasing path.  Chv\'atal and
  Koml\'os raised the question of estimating $m(n)$: the minimum value of
  $S(f)$ over all orderings $f$ of $K_n$. The best known bounds on $m(n)$
  are $\sqrt{n-1} \le m(n) \le
  \left({\textstyle\frac12} + o(1)\right)n$, due respectively to Graham and
  Kleitman, and to Calderbank, Chung, and Sturtevant.  Although the problem
  is natural, it has seen essentially no progress for three decades.

  In this paper, we consider the average case, when the ordering is chosen
  uniformly at random. We discover the surprising result that in the random
  setting, $S(f)$ often takes its maximum possible value of $n-1$ (visiting
  all of the vertices with a Hamiltonian increasing path).  We prove that
  this occurs with probability at least about $1/e$.  We also prove that
  with probability $1-o(1)$, there is an increasing path of length at least
  $0.85n$, suggesting that this Hamiltonian (or near-Hamiltonian)
  phenomenon may hold asymptotically almost surely.
\end{abstract}

\maketitle

\section{Introduction}

The classical result of Erd\H{o}s~and~Szekeres~\cite{erdos35} states that
any permutation of ${\{1, 2, \dots, n^2+1\}}$ contains a monotonic
subsequence of length $n+1$. Many extensions have been found for this
theorem: see, e.g., any of \cite{FoxPachSudakovSuk, Kalmanson,
MoshkovitzShapira, steele95, SzaboTardos}.  In this paper, we consider the
direction started by Chv\'atal and Koml\'os~\cite{chvatal71}.  They posed
the natural analogue of the problem for walks in a graph, which may be
considered an extension of Erd\H{o}s--Szekeres in a similar spirit to how
Ramsey's theorem is an extension of the pigeonhole principle.  Rather than
order the integers $\{1,2,\dots,n\}$, we order the edges of $K_n$ by
setting a bijection $f: E(K_n) \to \{1, 2, \dots, {n\choose2}\}$.  A walk
in $K_n$ whose edges are $(e_1, e_2, \dots, e_k)$ is called $f$-increasing
if the labels $f(e_1), f(e_2), \dots, f(e_k)$ form an increasing sequence.
(In this setting, we can assume that the monotone sequence of labels is
increasing without loss of generality, since a decreasing walk is just an
increasing walk traversed backwards.) As in Erd\H{o}s--Szekeres, the
objective is to prove a worst-case lower bound on the length of the longest
increasing walk.  Here, a walk is permitted to visit the same vertex
multiple times.

This question was resolved by Graham and Kleitman~\cite{graham73}. In
\cite{winkler08b}, Winkler communicates an elegant formulation of their
solution, which is due to Friedgut: a pedestrian stands at every vertex of
$K_n$, and the edges are called out in increasing order; whenever an edge
is called out, the pedestrians on its endpoints switch places. After all
edges have been called out, the $n$ pedestrians have taken a total of
$n(n-1)$ steps, and therefore at least one must have taken at least $n-1$
steps, producing an increasing walk with length (number of edges) at least
$n-1$.

This is easily seen to be tight for even $n$, for which $K_n$ can be
partitioned into $n-1$ perfect matchings, and edges within each individual
matching can receive consecutive labels. For odd $n$, a partition into $n$
maximal matchings only gives an upper bound of $n$; a more complicated
argument in~\cite{graham73} shows that $n-1$ is still correct for all $n$
except $n=3$ and $n=5$, where $n$ is the right answer.

Chv\'atal and Koml\'os also posed the corresponding problem for
self-avoiding walks, or paths, which are not permitted to revisit any
vertex.  Self-avoiding walks are generally much harder to analyze, and
indeed, in this setting, even determining the answer asymptotically is
still an open question. Calderbank, Chung, and
Sturtevant~\cite{calderbank84} construct an ordering of $K_n$ for which no
increasing path is longer than $\left(\frac12+o(1)\right)n$. The best lower
bound known was also proven by Graham and Kleitman in~\cite{graham73},
where they show that there must always be an increasing path of length
$\sqrt{n-1}$.

A simple variant of the ``pedestrian argument'' establishes this lower bound, which we include
for completeness. Suppose we modify the pedestrians' algorithm so that
if either pedestrian would visit an already-visited vertex, instead both
pedestrians stay put. If all pedestrians take at most $k$ steps, then at
most $\frac12 nk$ edges are walked; each pedestrian can refuse at most ${k
\choose 2}$ edges, so at most $n {k \choose 2}$ edges are refused, for a
total of at most $\frac12 n k^2$ edges. Since all ${n \choose 2}$ edges are
either walked or refused, $\frac12 nk^2 \ge {n \choose 2}$, so $k \ge
\sqrt{n-1}$.

Many extremal questions for combinatorial structures have also been studied
in the random setting (see, e.g., either of the books \cite{bollobas85,
janson00} on random graphs), which is in a sense equivalent to asking about
the average-case rather than the worst-case behavior of some property.  In
many situations, one can prove that interesting properties hold with
probability $1-o(1)$ over the space of random objects, in which case the
property is said to hold \emph{asymptotically almost surely}, or a.a.s.\
for short.  For example, the random analogue of the Erd\H{o}s--Szekeres
result considers the length $I_n$ of the longest increasing subsequence in
a random permutation of $\{1, 2, \dots, n\}$, and this is a well-studied
topic: it is known \cite{logan77, versik77} that $I_n \sim 2\sqrt{n}$
a.a.s.  (Here and in the remainder, we write $X \sim Y$ to denote $\lim_{n
\rightarrow \infty} \frac{X}{Y} = 1$.)

In this paper we consider the random version of the increasing path
problem.  Suppose the ordering $f$ is chosen uniformly at random. What can
we say about the length of the longest $f$-increasing path?  It is natural
to begin by considering the performance of the greedy algorithm on a
randomly ordered graph, since all walks traced by pedestrians in the above
argument are greedy in the following sense: every walk exits each vertex
along the minimally-labeled edge which maintains the increasing property.

\begin{prop}
  \label{random-greedy}
  Let $v_0$ be an arbitrary vertex in $K_n$.  Given an edge ordering $f$ of
  the edges of $K_n$, let the greedy $f$-increasing path from $v_0$ be the
  path $v_0 v_1 v_2 \ldots v_t$ with the following properties: (i) $v_0
  v_1$ is the lowest-labeled edge incident to $v_0$, (ii) for each $1 \leq
  i \leq t-1$, $v_{i+1}$ is the vertex $x$ which minimizes the label of
  $v_i x$ over all $x \not \in \{v_0, v_1, \ldots, v_i\}$ with $v_i x$
  exceeding the label of $v_{i-1} v_i$, and (iii) every vertex $x \not \in
  \{v_0, \ldots, v_t\}$ has $v_t x$ labeled below $v_{t-1} v_t$.  Then, if
  $f$ is chosen uniformly at random, the length of the greedy
  $f$-increasing path from $v_0$ is $(1 - \frac1e + o(1))n$ a.a.s.
\end{prop}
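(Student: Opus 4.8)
The plan is to analyze the greedy process directly as a Markov chain. Since the greedy path depends only on the relative order of the edge labels, we may replace $f$ by independent $\Uniform[0,1]$ values $U_e$ on the edges without changing the distribution of the path; let $\sigma$ denote the resulting (random) number of steps, so the greedy path length is $\sigma$. Run the process from $v_0$: after $i$ steps it has reached a vertex $v_i$ having traversed an edge of value $t_i := U_{v_{i-1}v_i}$, and there are $m_i := n-1-i$ unvisited vertices. The structural point is that, conditioned on the history, the $m_i$ values $\{U_{v_i x} : x \text{ unvisited}\}$ are i.i.d.\ $\Uniform[0,1]$ and independent of $t_i$: every edge the process has examined so far is incident to some earlier $v_j$ and leads to a vertex that was unvisited at time $j$, so none of these edges is among the $m_i$ in question, while the value $t_i$ and the identity of $v_i$ were determined by edges at $v_{i-1}$, not at $v_i$. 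Hence $(m_i,t_i)$ is a Markov chain, and from state $(m,t)$: with probability $t^m$ all fresh values lie below $t$ and the path ends (this is exactly condition (iii)); otherwise $t_{i+1}$ is the smallest fresh value exceeding $t$, distributed as the minimum of $\mathrm{Bin}(m,1-t)$ (conditioned positive) many $\Uniform[t,1]$ variables. A direct computation with this law gives the drift
\[
  \E\bigl[(t_{i+1}-t_i)\mathbf 1[\text{survive}] \,\bigm|\, m_i=m,\ t_i=t\bigr]
  \;=\; \frac{1-(m+1)t^{m}+mt^{m+1}}{m+1}
  \;=\; \frac{1-(N+1)t^{m}}{m+1},\qquad N:=m(1-t),
\]
and since $t^m\le e^{-N}$ this is $\tfrac1{m+1}\bigl(1-O((N+1)e^{-N})\bigr)\approx\tfrac1{n-i}$ whenever $N\gg1$. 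So in the ``bulk'' the chain obeys $t_{i+1}-t_i\approx\tfrac1{n-i}$, whence $t_i\approx\sum_{j<i}\tfrac1{n-j}\approx-\log(1-i/n)=:\phi(i/n)$, the Riemann sum for $\int_0^{i/n}\frac{dx}{1-x}$. The key arithmetic fact is $\phi(1-\tfrac1e)=1$: the chain $t_i$ rises toward its ceiling of $1$ exactly as $i/n\to1-\tfrac1e$.

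Next I would make the bulk estimate rigorous by martingale concentration (an instance of the differential-equation method). Writing $\mathrm{drift}_j$ for the conditional drift above, $M_i:=t_{i\wedge\sigma}-\sum_{j<i\wedge\sigma}\mathrm{drift}_j$ is a martingale; a union bound shows every increment is a.a.s.\ at most $O(\tfrac{\log n}{n})$ (a fresh interval of length $\tfrac{C\log n}{m_i}$ is empty with probability $\le n^{-C}$), while $\sum_{j\le(1-1/e-\epsilon)n}\Var(t_{j+1}-t_j\mid\mathcal{F}_j)=O(\sum 1/m_j^2)=O(1/n)$, so Freedman's inequality gives $\max_i|M_i|=o(1)$ a.a.s. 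A stopping-time bootstrap closes the circularity that the clean drift estimate needs $t_j$ bounded away from $1$ (which is part of the claim): letting $\rho$ be the first time $|t_i-\phi(i/n)|>\epsilon/2$ or the path ends, one checks that for $i\le\rho\wedge(1-\tfrac1e-\epsilon)n$ one has $N_j=\Omega(n)$, hence $\sum_{j<i}\mathrm{drift}_j=\phi(i/n)+o(1)$, hence $|t_i-\phi(i/n)|=o(1)$, which forces $\rho>(1-\tfrac1e-\epsilon)n$; moreover the path cannot have ended before then, since there $t_i\le1-\gamma$ for some $\gamma=\gamma(\epsilon)>0$ so the per-step halting probability $t_i^{m_i}\le(1-\gamma)^{m_i}=e^{-\Omega(n)}$ is negligible after a union bound over $i$. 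This yields $\sigma\ge(1-\tfrac1e-\epsilon)n$ a.a.s.\ for every fixed $\epsilon>0$.

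For the matching upper bound I would track $N_i=m_i(1-t_i)$, which is \emph{non-increasing} (each step lowers $m_i$ and raises $t_i$). The bulk estimate gives $N_{i_1}=O(\epsilon n)$ at $i_1:=(1-\tfrac1e-\epsilon)n$. The drift formula shows $\E[N_{i+1}-N_i\mid\mathcal{F}_i]\le-\tfrac14$ while $N_i\ge2$, and crucially the per-step decrease $(1-t_i)+(m_i-1)D_i$, with $D_i:=t_{i+1}-t_i$, has light tails: $P\bigl((m_i-1)D_i>u\bigr)=(1-u/(m_i-1))^{m_i}\le e^{-u}$, so $(m_i-1)D_i$ is stochastically dominated by an exponential. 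A Bernstein-type bound for martingales with sub-exponential increments then shows $N_i$ drops below $2$ within $O(\epsilon n)$ further steps a.a.s. Once $N_i<2$ it stays below $2$ (monotonicity), so the halting probability $t_i^{m_i}\ge e^{-2N_i}>e^{-4}$ at every remaining step and the path ends within a further $O(\log n)$ steps a.a.s. Altogether $\sigma\le(1-\tfrac1e-\epsilon)n+O(\epsilon n)+O(\log n)$, and choosing $\epsilon$ small (depending on the target error) gives $\sigma\le(1-\tfrac1e+o(1))n$ a.a.s.; combined with the previous paragraph, $\sigma=(1-\tfrac1e+o(1))n$ a.a.s.

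I expect the main obstacle to be this ``endgame.'' Once $N_i$ is no longer $\omega(1)$ the chain's increments are $\Theta(1/N_i)$ rather than $\Theta(1/n)$, the differential-equation heuristic no longer applies, and $(N_i)$ must be handled as a genuine negative-drift random walk that is killed at a rate bounded below whenever it is near the bottom; this is routine in spirit but delicate if one wants concentration (not merely expectation) for the time $N_i$ needs to descend to $O(1)$ — the monotonicity of $N_i$ and the exponential tail bound on $(m_i-1)D_i$ are what make it tractable. A secondary fussy point is the bootstrap in the bulk, where the stopping time $\rho$ is introduced precisely because the estimate one is proving is also an input to the drift bound.
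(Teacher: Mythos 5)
Your proposal is correct in substance, but it takes a genuinely different route from the paper. The paper sidesteps the entire Markov-chain/endgame machinery with one trick: it defines, along a path chosen by the rule ``always take the edge of smallest \emph{jump}'', the jumps $X_k = (f(e_k)-f(e_{k-1}))\bmod 1$, and continues the path fictitiously with a non-increasing step whenever the greedy path would halt. With the wrap-around, each $X_k$ is \emph{exactly} the minimum of $n-k$ i.i.d.\ $\Uniform(0,1)$ variables, independent of the past (no conditioning on survival, no dependence on the current label $t_i$), with $\E[X_k]=\frac{1}{n-k+1}$ and $\E[X_k^2]=O\big(\frac{1}{(n-k)^2}\big)$; the greedy length is the largest $t$ with $X_1+\cdots+X_t\le 1$, so a single Chebyshev computation showing $S_t=(1+o(1))\log\frac{1}{1-\tau}$ delivers both the lower \emph{and} the upper bound, since $S_t>1$ a.a.s.\ for $\tau>1-\frac1e$. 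Your version tracks the actual label chain $(m_i,t_i)$: the drift formula $\frac{1-(N+1)t^m}{m+1}$, the halting probability $t^m$, the Markov/freshness argument, the monotonicity of $N_i=m_i(1-t_i)$, and the exponential tails all check out, and the bootstrap-plus-Freedman bulk argument and the killed negative-drift walk for the endgame can be made rigorous along the lines you describe (one small slip: $\Pr[(m_i-1)D_i>u]$ is not exactly $(1-u/(m_i-1))^{m_i}$ once halting and the ceiling at $1$ are accounted for, but the bound $\Pr[D_i>s,\ \text{survive}]\le(1-s)^{m_i}\le e^{-s m_i}$ gives the sub-exponential domination you need). What each approach buys: yours is self-contained and robust in the sense that it handles the process ``as is'' and makes the stopping mechanism explicit, but precisely the part you flag as delicate -- the regime where $N_i=O(1)$ and increments are no longer $\Theta(1/n)$ -- is what costs you the stopping-time bootstrap and the separate upper-bound analysis; the paper's mod-$1$ jump plus fictitious continuation collapses that regime entirely, turning the proposition into a two-line first-and-second-moment computation for a sum of independent minima of uniforms, which is why its proof occupies only a page. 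The ideas in your sketch are closer in spirit to the paper's later waiting-time/martingale analysis of the $k$-greedy algorithm than to its proof of Proposition~\ref{random-greedy}.
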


Since the analysis in the previous result is tight, one must consider more
complex algorithms in order to find longer paths in the random setting.
The main challenge in analyzing more sophisticated algorithms arises from
the fact that randomness is revealed during the algorithm's execution.  We
introduce a novel extension of the greedy algorithm which adds some
foresight, but which is formulated in a way that is amenable to analysis.
At each step, this \emph{$k$-greedy algorithm} greedily finds a tree of $k$
potential edges that can extend the increasing path, before choosing the
one that has the best short-term prospects.  (The ordinary greedy algorithm
is the $k=1$ case of this algorithm.)  A detailed specification of this
algorithm appears in Section \ref{sec:k-greedy}.

The performance of the $k$-greedy algorithm is related to statistics which
arise in the Chinese Restaurant Process, or equivalently, to the random
variable $L_k$ which measures the length of the longest cycle in a
uniformly random permutation of $\{1, \ldots, k\}$.  Let $\alpha_k = \E[
  \frac1{L_k} + \frac1{L_k +1} + \cdots + \frac1k]$.

\begin{theorem}
  \label{k-greedy}
  If an edge ordering $f$ of $K_n$ is chosen uniformly at random, then the
  $k$-greedy algorithm finds an $f$-increasing path of length $(1 -
  e^{-1/\alpha_k} + o(1)) n$ a.a.s.  Also, $\alpha_k$ is monotone
  decreasing in $k$ and explicitly computable.  The particular choice
  $k=100$ produces an increasing path of length $0.85n$ a.a.s.
\end{theorem}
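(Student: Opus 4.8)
The plan is to pass to an equivalent i.i.d.\ weight model, track a two-dimensional state of the $k$-greedy algorithm, show that this state concentrates around the solution of an ordinary differential equation, and then identify the drift of that equation with $\alpha_k$ via the Chinese Restaurant Process. First I would assign each edge an independent $\Uniform[0,1]$ weight (this induces the uniform edge ordering) and invoke the principle of deferred decisions, revealing a weight only when the algorithm queries it. Running the $k$-greedy algorithm of Section~\ref{sec:k-greedy} from an arbitrary $v_0$, I would record after each commit step the pair $(i,t)$, where $i$ is the number of vertices already on the path and $t$ is the current exploration frontier (the largest weight revealed so far; the last committed edge always lies at or below it). Writing $x=i/n$ and $m=n-i=(1-x)n$, I would then run a fluid-limit (differential-equation method) argument: as long as $m=\omega(1)$, the trajectory $(i,t)$ should stay within $o(1)$ of the deterministic curve $t=t(x)$ determined by the expected one-step change of $t$, since the per-step changes are $O(\log k/m)$, and the dependence created by previously revealed weights --- only $O(k)$ per commit step, hence $O(kn)\ll\binom{n}{2}$ in all --- perturbs each drift estimate by just $o(1/m)$.

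The heart of the proof, and the step I would spend the most care on, is the drift estimate $\E[\Delta t\mid(i,t)]=(\alpha_k+o(1))/m$. When the algorithm builds its $k$-edge look-ahead tree, any node already in the tree can be extended to any of $m-O(k)=(1+o(1))m$ unvisited vertices, and an extending edge has weight uniform above that node's own weight; so, rescaling weights near the frontier by the factor $m$, the $k$ successively revealed edges behave up to $o(1)$ errors like the first $k$ events of a Yule process: the look-ahead tree becomes a uniform random recursive tree on $k+1$ nodes, and its $j$-th edge arrives at rescaled time $\sum_{i=1}^{j}\mathcal E_i$ with the $\mathcal E_i\sim\mathrm{Exp}(i)$ independent (mean $1/i$). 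By the Chinese Restaurant Process --- equivalently, the classical bijection between recursive trees and permutations --- the multiset of sizes of the subtrees hanging from the root of this tree has the law of the cycle type of a uniform random permutation of $\{1,\dots,k\}$, so its largest part has the distribution of $L_k$. The algorithm commits to the edge into the root of this largest subtree --- the choice that carries the most already-explored structure into the next round, which is what ``best short-term prospects'' amounts to --- so the next look-ahead needs only $k+1-L_k$ fresh edges; in the exponential-clock rescaling these arrive while the explored tree grows from $L_k$ to $k+1$ nodes, contributing inter-arrival times of means $\frac1{L_k m},\frac1{(L_k+1)m},\dots,\frac1{km}$. Summing and taking expectations over $L_k$ gives expected frontier advance $\frac1m\,\E\bigl[\tfrac1{L_k}+\dots+\tfrac1k\bigr]=(\alpha_k+o(1))/m$ per committed vertex.

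Feeding this into the fluid limit yields $\frac{dt}{dx}=\frac{\alpha_k}{1-x}$ with $t(0)=0$, hence $t(x)=\alpha_k\log\frac1{1-x}$; the path can be extended until there is no unvisited neighbour above the frontier, which fails only once $1-t=o(1)$, i.e.\ essentially when $t$ reaches $1$. Solving $\alpha_k\log\frac1{1-x}=1$ gives $x=1-e^{-1/\alpha_k}$, so the $k$-greedy path has length $(1-e^{-1/\alpha_k}+o(1))n$ a.a.s.; taking $k=1$, where $L_1\equiv1$ and $\alpha_1=1$, recovers Proposition~\ref{random-greedy}. For the remaining claims: $\alpha_k=\sum_{\ell=1}^k\Pr[L_k=\ell]\bigl(\tfrac1\ell+\dots+\tfrac1k\bigr)$ is a finite sum with each $\Pr[L_k=\ell]$ an explicitly computable rational (for instance from $\Pr[L_k\le\ell]=[z^k]\exp\bigl(\sum_{j=1}^\ell z^j/j\bigr)$), so $\alpha_k$ is explicitly computable; and coupling a uniform permutation of $[k]$ inside a uniform permutation of $[k+1]$ through the Chinese Restaurant Process gives monotonicity --- in fact the identity $\alpha_k-\alpha_{k+1}=\frac{\E[a_k]-1}{k+1}\ge0$, where $a_k$ is the number of cycles of maximum length. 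Finally, evaluating the formula at $k=100$ gives $\alpha_{100}$ with $1-e^{-1/\alpha_{100}}\ge0.85$.

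The main obstacle I anticipate is exactly this drift estimate together with its error bookkeeping: rigorously comparing the look-ahead tree to a uniform random recursive tree (the extension counts are $m-O(k)$ rather than $m$, the weights are uniform rather than a literal Poisson process, and the subtree carried between rounds is not exactly in the right conditional law), and controlling the accumulation of the small dependencies introduced by previously revealed weights, so that every one of these effects contributes only $o(1/m)$ per step and the value $(1-e^{-1/\alpha_k})n$ survives unchanged.
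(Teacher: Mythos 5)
Your proposal is correct and follows essentially the same route as the paper: the same uniform-label, deferred-decision model, the same identification of the per-step drift $(\alpha_k+o(1))/(n-\ell)$ via the Chinese Restaurant Process / uniform recursive tree structure of the look-ahead tree (largest root-subtree distributed as $L_k$), the same integration of the drift to total waiting time $1$ at length $(1-e^{-1/\alpha_k})n$, and the same CRP coupling for the monotonicity of $\alpha_k$. The differences are only in packaging: you invoke the differential-equation method where the paper hand-rolls the equivalent Azuma/stopping-time martingale argument (its Lemmas~\ref{lemma:asymptotic-uniformity}--\ref{lemma:chinese-distribution} carry out precisely the error bookkeeping you flag as the main obstacle), and you compute $\alpha_{100}$ from a generating function rather than the Golomb--Gaal recurrence.
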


\noindent \textbf{Remark.} As $k \to \infty$, the monotonicity in the above
result implies that $\alpha_k$ converges to $\alpha = \lim_{k \rightarrow
\infty} \E\big[ - \log \frac{L_k}{k}\big]$, a constant related to the
Golomb--Dickman constant $\lim_{k \rightarrow \infty}
\E\big[\frac{L_k}{k}\big] \approx 0.6243$.  Numerically, we estimate
$\alpha \approx 0.5219$, and $1 - e^{-1/0.5219} \approx 0.853$, so $k=100$
appears to be a near-optimal choice.  

\medskip

The first two results establish successively stronger linear lower bounds
on the increasing path length in a random edge ordering. There is a trivial
upper bound of $n-1$: the length of a Hamiltonian path, and at first
glance, one may assume that a Hamiltonian increasing path would be too much
to hope for.  Indeed, when one calculates the expected number of
$f$-increasing Hamiltonian paths, the total number of paths, which is $n!$,
is almost exactly canceled by the probability that each is increasing,
which is $\frac1{(n-1)!}$.  Thus, the expected number of increasing
Hamiltonian paths is only $n$, which although it tends to infinity, grows
extremely slowly.  In comparison, in the Erd\H{o}s--R\'enyi model $G_{n,p}$,
where each edge appears independently with probability $p$, the expected
number of Hamilton paths is about $n$ when $p \sim \frac{e}{n}$, but
Hamiltonian paths don't appear until $p \sim \frac{\log n}{n}$.
Furthermore, for Hamilton cycles in the random graph process, at the moment
Hamiltonicity is achieved, the number of Hamilton cycles jumps from 0 to
$\big[(1 +o(1))\frac{\log n}{e} \big]^n$, as shown recently by Glebov and
Krivelevich \cite{GlebovKrivelevich} (improving an earlier result of Cooper
and Frieze \cite{CooperFrieze}).  It may therefore come as a surprise that
random edge orderings often have Hamiltonian paths, despite the extremely
low expected value.

\begin{theorem}
  \label{main-theorem}
  If an edge ordering $f$ of $K_n$ is chosen uniformly at random, then an
  $f$-increasing Hamiltonian path exists in $K_n$ with probability at least
  $\frac1e + o(1)$.
\end{theorem}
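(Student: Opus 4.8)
The plan is a second-moment argument applied to the number of $f$-increasing Hamiltonian paths emanating from a single fixed vertex, exploiting the fact that this count has expectation exactly $1$.

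Fix a vertex, say $v_1$, and call a directed Hamiltonian path $v_1 v_2 \cdots v_n$ (starting at $v_1$) \emph{good} if $f(v_1 v_2) < f(v_2 v_3) < \cdots < f(v_{n-1} v_n)$. Let $X$ count the good paths. There are $(n-1)!$ directed Hamiltonian paths from $v_1$, and by symmetry each is good with probability exactly $\frac{1}{(n-1)!}$ (among the $(n-1)!$ relative orders of its $n-1$ edge-labels, exactly one is increasing), so $\E[X] = 1$ on the nose. Since $\{X \ge 1\}$ already implies the existence of an $f$-increasing Hamiltonian path, Theorem~\ref{main-theorem} will follow once we show $\Pr[X \ge 1] \ge \frac1e - o(1)$, and by the Chung--Erd\H{o}s (Paley--Zygmund) inequality
\[
  \Pr[X \ge 1] \ \ge\ \frac{(\E X)^2}{\E[X^2]} \ =\ \frac{1}{\E[X^2]},
\]
it suffices to prove $\E[X^2] \le e + o(1)$.

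For the second moment, I would first record the convenient reduction $\E[X^2] = \E\big[\,X \mid \vec\pi_0 \text{ good}\,\big]$ for any fixed directed Hamiltonian path $\vec\pi_0$ from $v_1$: expanding $\E[X^2] = \sum_{\vec\pi}\sum_{\vec\pi'} \Pr[\vec\pi \text{ and } \vec\pi' \text{ good}]$ and using that, by the symmetry of $K_n$, the inner sum $\sum_{\vec\pi'} \Pr[\vec\pi' \text{ good} \mid \vec\pi \text{ good}]$ does not depend on $\vec\pi$, the double sum telescopes against $\E[X] = 1$. So the heart of the matter is the following estimate: conditioned on the edge-labels along one fixed Hamiltonian path $\vec\pi_0 = v_1 v_2 \cdots v_n$ occurring in sorted order, the expected number of good Hamiltonian paths from $v_1$ is at most $e + o(1)$; equivalently, the expected number of good paths \emph{other} than $\vec\pi_0$ is at most $e - 1 + o(1)$.

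The structural observation driving this estimate is that a good path $\vec\pi'$ can reuse an edge of $\vec\pi_0$ only if it traverses that edge in the same direction as $\vec\pi_0$, since otherwise the increasing labels along $\vec\pi_0$ would force a decrease inside $\vec\pi'$. Hence $\vec\pi' \cap \vec\pi_0$ is a disjoint union of directed subpaths (``segments'') of $\vec\pi_0$; these segments must occur along $\vec\pi'$ in their $\vec\pi_0$-order (again forced by monotonicity); and the remaining edges of $\vec\pi'$ are ``fresh'', their labels being uniformly interleaved with everything else given that $\vec\pi_0$ is good. I would therefore organize the sum over $\vec\pi' \ne \vec\pi_0$ by which edges of $\vec\pi_0$ are omitted and how the resulting segments and ``freed'' vertices (those losing both of their $\vec\pi_0$-edges) are reassembled, and for each configuration (i) count the $\vec\pi'$ realizing it and (ii) compute the probability that the fresh edges admit labels making $\vec\pi'$ good --- a ratio of factorials governed by the number of linear extensions of the poset obtained by overlaying the two increasing chains. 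Summing these contributions and showing the total is $e - 1 + o(1)$ is the main obstacle: the overlap statistics of two random Hamiltonian paths are Poisson-like and ought to generate the series $\sum_{k \ge 0}\frac1{k!} = e$, but making this precise requires isolating the dominant configurations, controlling the lower-order ones, and tracking the way the (random) segment-lengths of $\vec\pi_0$ feed into the fresh-label probabilities. Once $\E[X^2] \le e + o(1)$ is in hand, the displayed inequality yields $\Pr[X \ge 1] \ge \frac1e - o(1)$, completing the proof.
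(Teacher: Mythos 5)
Your setup is sound up to a point: indeed $\E[X]=1$, and by symmetry under permutations fixing $v_1$ the identity $\E[X^2]=\E[X\mid\vec\pi_0\text{ good}]$ holds. The fatal gap is that the estimate the whole argument rests on, $\E[X^2]\le e+o(1)$, is false, and it fails precisely because you restricted to paths through a common fixed starting vertex. Consider just the pairs $(A,B)$ of directed Hamiltonian paths from $v_1$ whose unique common edge is the first edge of both (say $v_1v_2$, traversed the same way). There are $(e^{-2}+o(1))(n-1)!\,(n-2)!$ such pairs (choose $A$, then a continuation of $v_1v_2$ avoiding the other edges of $A$; the $e^{-2}$ is the same Wolfowitz-type estimate as in Lemma~\ref{lemma:vertices}), and for each the probability that both are increasing is exactly $\binom{2n-4}{n-2}/(2n-3)!=1/\bigl(((n-2)!)^2(2n-3)\bigr)$, since the shared edge must be the global minimum of the $2n-3$ labels and the two tails interleave freely. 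Hence this family alone contributes $(e^{-2}+o(1))\frac{n-1}{2n-3}\to\frac{1}{2e^2}$ to $\E[X^2]$. This sits on top of the $e+o(1)$ that your Poisson/interleaving heuristic correctly predicts for pairs with generic, spread-out overlaps (that heuristic is in effect the paper's $S_1$ computation), because here the probability boost from the shared edge is of order $n$, not $O(1)$: conditioned on $\vec\pi_0$ being increasing, its first edge has label of order $1/n$, and a competitor can reuse that cheap first step exactly because it too is forced to start at $v_1$. Longer shared prefixes add further positive constants, so $\E[X^2]\to e+c_0$ with $c_0\ge\frac{1}{2e^2}$, and the second-moment inequality then gives only $\Pr[X\ge1]\ge\frac{1}{e+c_0}-o(1)$, strictly below $\frac1e$. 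This is why the paper works with the count $H_n$ of \emph{all} directed Hamiltonian paths: there $\E[H_n]=n$, the same-start shared-prefix pairs contribute only $O(n)=o(n^2)$ and cannot disturb the constant in $\E[H_n^2]\sim en^2$, and $(\E H_n)^2/\E[H_n^2]$ delivers exactly $\frac1e+o(1)$.

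Two further issues. Your structural claim that a shared edge must be traversed in the same direction by both paths is wrong for isolated shared edges: orientation is forced only for common segments of length at least two, while a length-one common segment can be traversed either way (this is the $2^\ell$ factor in Lemma~\ref{lemma:profiles}), so your enumeration of configurations would miscount even where your heuristic constant is right. And independently of the constant, the decisive step --- actually carrying out the second-moment computation over overlap configurations, with uniform control of the non-generic ranges --- is only sketched in your proposal, whereas it constitutes essentially the entire content of the paper's proof (Lemmas~\ref{lemma:profiles}, \ref{lemma:vertices} and \ref{lem:s123-asymp}). As it stands the proposal aims at a target ($\E[X^2]\le e+o(1)$ for the fixed-start count) that cannot be achieved, so the route must be changed, e.g.\ by counting all paths as the paper does.
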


Theorems \ref{k-greedy} and \ref{main-theorem} complement each other, as
they establish a.a.s.\ almost-Hamiltonicity and almost-a.a.s.\ Hamiltonicity.
Numerical simulations seem to indicate that a stronger result is true,
which we propose as follows.

\begin{conjecture}
  \label{conjecture}
  If an edge ordering $f$ of $K_n$ is chosen uniformly at random, then an
  $f$-increasing Hamiltonian path exists in $K_n$ a.a.s.
\end{conjecture}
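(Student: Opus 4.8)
\textbf{Proof proposal for Conjecture~\ref{conjecture}.}
The plan is to boost the constant-probability Hamiltonicity guaranteed by Theorem~\ref{main-theorem} up to a.a.s.\ by combining a robust near-Hamiltonicity result with a sprinkling/rotation argument on the leftover vertices. The obstacle to a naive second-moment proof is that the number of increasing Hamiltonian paths, even when positive, is too small (expectation only $n$) for Chebyshev to force concentration; so instead of counting spanning paths directly, I would count increasing paths that miss a bounded number of vertices, show these are plentiful, and then patch.

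First I would prove a ``robust'' version of Theorem~\ref{k-greedy}: with probability $1-o(1)$, after running the $k$-greedy algorithm (or a variant) one obtains not just one long increasing path but an increasing path $P$ on all but $o(n)$ vertices together with a large family of \emph{rotation endpoints} --- vertices $w$ such that $P$ can be rerouted to end at $w$ while still using essentially the same vertex set. Concretely, I would track, along the $k$-greedy execution, the top label $\ell$ used so far and argue that the path can be extended greedily from many different ``recent'' vertices; this is the analogue of Pósa rotation, but the increasing constraint means a rotation is legal only when the swapped-in edge has a label lying in a prescribed window. The key observation is that, conditioned on the path built so far, the labels of edges from the current endpoint to the unused vertices are a uniformly random assignment of the remaining labels, so with high probability a positive fraction of unused vertices are reachable by a legal one-step extension --- giving $\Omega(n)$ (indeed $(1+o(1))(1-e^{-1/\alpha})n$) choices of endpoint, each of which can then be extended further. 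Iterating, I would get that a.a.s.\ one can reach an increasing path covering at least $(1-\varepsilon)n$ vertices whose final endpoint can be any one of a set of $\Omega(n)$ vertices.

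Next I would absorb the remaining $\varepsilon n$ vertices. Here I would reserve in advance a random set $R$ of the highest $\Theta(\varepsilon n \log n)$ labels (a ``sprinkling'' set); conditioned on everything revealed during the path-building phase, the edges carrying labels in $R$ form an independent uniformly random graph among the not-yet-used vertices, and in particular the bipartite-type graph between the $\Omega(n)$ available endpoints and the $\varepsilon n$ leftover vertices is dense enough to contain, a.a.s., a Hamiltonian path of the leftover vertices that starts at one of the available endpoints and uses only labels from $R$ in increasing order. Formally this is a statement that a random graph with $\Theta(m\log m)$ edges on $m$ vertices, with a prescribed increasing ordering, contains a Hamiltonian path from a linear-sized set of start vertices --- which follows from the known a.a.s.\ increasing-Hamiltonicity-type behavior in dense instances, or can be proven directly by another Pósa-style rotation/extension argument inside $R$ (the labels in $R$ being all larger than $\ell$, monotonicity between the two phases is automatic). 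Splicing this leftover path onto $P$ at the matching endpoint yields an $f$-increasing Hamiltonian path of $K_n$.

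The main obstacle I expect is making the rotation step compatible with the increasing constraint: in an unlabelled graph a Pósa rotation is ``free,'' but here every rerouting must respect a monotone window on the new edge's label, and worse, a sequence of rotations imposes a chain of nested label constraints that can quickly become infeasible. I would address this by rotating only at the \emph{high-label end} of the path, where the ``used up'' prefix of labels does not constrain future moves, and by keeping the rotations \emph{shallow} (bounded depth), so that the set of label-windows involved stays of size $\Omega(1/n)$ each and a random label falls into one with the needed probability; a union bound / martingale concentration argument over the $o(n)$ absorption steps then controls the failure probability. A secondary technical point is decoupling: one must verify that conditioning on the first phase's revealed labels leaves enough independence in the sprinkling set $R$, which is handled by fixing $R$ as a set of label \emph{positions} at the outset and only revealing which edge occupies which position inside $R$ during the second phase.
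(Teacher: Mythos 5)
This statement is left open in the paper: it is stated as Conjecture~\ref{conjecture}, and the paper proves only the constant-probability bound of Theorem~\ref{main-theorem} (via the asymptotic second moment $\E[H_n^2] \sim e n^2$, with small subgraph conditioning suggested as a possible route to a full proof). So your proposal must stand on its own, and it does not: the absorption/sprinkling phase is fatally at odds with a first-moment count. Every edge of the final Hamiltonian path must carry a label in $[0,1]$, and the labels of the absorbing subpath must all exceed the top label $\ell$ used in phase 1. If $m$ vertices remain to be absorbed and the reserved window has measure $\delta = 1-\ell$, then for any fixed ordering of the leftover vertices the probability that its $m-1$ edges have labels in the window and in increasing order is $\delta^{m-1}/(m-1)!$, so the expected number of admissible absorbing paths (even allowing $\Omega(n)$ starting endpoints) is at most about $n\, m\, \delta^{m-1}$, which tends to $0$ unless $m = O(\log n/\log(1/\delta))$. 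But the very analysis of Section~\ref{sec:k-greedy-rigorous} shows the waiting times concentrate: reaching length $(1-e^{-b/\alpha_k})n$ costs label budget $b$, so covering all but $\varepsilon n$ vertices already requires budget $\alpha\log(1/\varepsilon)$, which exceeds $1$ for every $\varepsilon < e^{-1/\alpha}\approx 0.147$. In other words, the $k$-greedy machinery caps out near $0.853n$ even when it spends the entire label range, leaving you with linearly many leftover vertices and essentially no reserved window; a.a.s.\ no increasing spanning path of the leftover set using only reserved labels exists, so there is nothing to splice. Reserving $\Theta(\varepsilon n\log n)$ top labels does not help either, since the issue is not the density of the sprinkled graph but the $1/(m-1)!$ cost of the monotone ordering, which no polynomial-sized sprinkling set can offset.

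The rotation component has a secondary but real problem: the claim that, conditioned on the path built so far, the labels from the current endpoint to unused vertices are a uniformly random assignment of the remaining labels is not correct — the exploration reveals negative information on exactly those edges (this is precisely what Lemma~\ref{lemma:asymptotic-uniformity} and Lemma~\ref{lemma:vertex-waiting-time} are needed to control in the paper), and each legal rotation must place a new label inside a window of width $O(1/n)$, so chains of rotations carry probability costs that you never quantify. More fundamentally, rotations cannot rescue the budget accounting above: the obstruction is that any two-phase scheme which commits to a long increasing prefix and then tries to complete it restricts the completion to a sub-instance whose expected number of increasing spanning paths vanishes, whereas the full problem sits exactly at the first-moment threshold ($\E[H_n]=n$). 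This is why the paper stops at probability $\frac1e+o(1)$ and proposes variance analysis (e.g.\ small subgraph conditioning using $\E[H_n^2]\sim en^2$) rather than an algorithmic augmentation argument. As written, your proposal does not prove the conjecture.
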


The proof of Theorem~\ref{main-theorem} uses the second moment method. Let
$H_n$ be the number of increasing Hamiltonian paths.  As mentioned above,
it is easy to see that $\E[H_n] = n!  \cdot \frac1{(n-1)!} = n$.  The main
step of our proof is to upper-bound the second moment $\E[H_n^2]$.  We
actually go one step further, and asymptotically determine $\E[H_n^2] =
(1+o(1)) e n^2$, from which the result follows.  This asymptotic
computation of $\E[H_n^2]$, up to multiplicative error $1+o(1)$, may be
useful for a full proof of Conjecture \ref{conjecture} via analysis of
variance. For example, the problem might be tractable by the small subgraph
conditioning method first used by Robinson and Wormald~\cite{robinson94} to
prove that the random $d$-regular graph $G_{n,d}$, for $d\ge 3$, contains a
Hamiltonian cycle a.a.s., and this method requires the precise second
moment estimate that we provide.

\section{The length of the greedy increasing path}
\label{sec:greedy}

As a warm-up for the $k$-greedy algorithm, we begin by proving
Proposition~\ref{random-greedy}, which establishes that the greedy
algorithm produces an increasing path of linear length a.a.s.  In this
section, it is convenient to introduce a different (but equivalent) model
for generating the edge labels, which features more independence.  In order
to sample a uniform permutation of $\{1, 2, \ldots, \binom{n}{2}\}$ for the
labels, we choose a labeling $f: E(K_n) \to [0,1]$, where the labels $f(e)$
are i.i.d.\ $\Uniform(0,1)$ random variables. Since with probability $1$ no
two labels will be equal, this induces a total ordering on the edges, and
by symmetry, all orderings occur with uniform probability.

Let $(e_1, e_2, \dots, e_k)$ be the edges of any path, not necessarily
$f$-increasing. We define the \emph{jumps} $X_1, \dots, X_k$ along this
path by $X_1 = f(e_1)$, and
\[
	X_k = (f(e_k) - f(e_{k-1}))\bmod1 =
	\begin{cases}
		f(e_k) - f(e_{k-1}), & \mbox{if }f(e_k) > f(e_{k-1}) \\
		1 + f(e_k) - f(e_{k-1}), & \mbox{if } f(e_k) \le f(e_{k-1}).
	\end{cases}
\]
The sum $X_1 + X_2 + \cdots + X_k$ telescopes to $f(e_k) + p$, where $p$ is the number of points at which the path fails to be $f$-increasing. Therefore the path is $f$-increasing if and only if $X_1 + X_2 + \cdots + X_k \le 1$.

Choose a Hamiltonian path $(e_1, e_2, \dots, e_{n-1})$ by the following
rule: starting from an arbitrary vertex, always take the edge with the
smallest jump. The result coincides with the greedy path for the entire
length of the greedy path. However, when the greedy path would stop, this
rule merely makes a step that isn't $f$-increasing. This allows us to keep
going until $n-1$ edges are chosen, no matter what. The length of the
greedy increasing path will therefore be the largest $k$ for which the
initial segment $(e_1, e_2, \dots, e_k)$ forms an $f$-increasing path;
equivalently, the largest $k$ such that $X_1 + X_2 + \cdots + X_k \le 1$.

When constructing this path, we only expose the labels of the edges as we
encounter them.  Specifically, if we already have the partial Hamiltonian
path $(e_1, \dots, e_{k-1})$, then the next edge $e_k$ will be one of the
$n-k$ edges from the last vertex to a new vertex not on the current path.
Call those possible edges $e_k^{1}, \dots, e_k^{n-k}$, and expose
$f(e_k^1), \dots, f(e_k^{n-k})$.  The jump $X_k$ is given by
\[
	X_k = \min \left\{ (f(e_k^1) - f(e_{k-1}))\bmod 1, \dots, (f(e_k^{n-k}) - f(e_{k-1}))\bmod 1\right\}.
\]
The values $(f(e_k^j) - f(e_{k-1})) \bmod 1$ are also uniformly distributed
on [0,1], and exposing them is equivalent to exposing $f(e_k^1), \dots,
f(e_k^{n-k})$. This means that $X_k$ is the minimum of $n-k$ uniform
random variables, which are independent from each other and all previously
exposed values.

Since $\Pr[X_k \ge x] = (1-x)^{n-k}$, the probability density function is $(n-k) (1-x)^{n-k-1}$, so:
\begin{align*}
	\E[X_k] &= \int_0^1 x (n-k) (1-x)^{n-k-1}\,dx = \frac1{n-k+1}, \\
	\E[X_k^2] &= \int_0^1 x^2 (n-k) (1-x)^{n-k-1}\,dx = \frac2{(n-k+1)(n-k+2)}.
\end{align*}

Suppose $t = \tau n$ for some constant $0 < \tau < 1$, and let $S_t = X_1 +
X_2 + \cdots + X_t$. Then $\E[S_t] = \frac{1}{n} + \cdots +
\frac{1}{n-t+1}$, and therefore $\E[S_t] = \log \frac{n}{n-t} + O(n^{-1}) =
\log \frac{1}{1-\tau} + O(n^{-1})$. Furthermore, we have $\Var[X_k] =
O(n^{-2})$ for $1 \le k \le t$, so $\Var[S_t] = O(n^{-1})$, which means
that $S_t = (1+o(1)) \E[S_t]$ a.a.s.  When $\tau < 1 - \frac1e$,
$\E[S_t]<1$, so $S_t < 1$ a.a.s., and the greedy path is $f$-increasing
through the first $t$ steps. On the other hand, when $\tau > 1-\frac1e$,
$\E[S_t]>1$, so $S_t > 1$ a.a.s., and the greedy path is not $f$-increasing
after the first $t$ steps.  This completes the proof of Proposition
\ref{random-greedy}.  \hfill $\Box$

\section{The $k$-greedy algorithm}

Throughout this section, let $k$ be a constant.  The $k$-greedy algorithm
extends the greedy algorithm by adding some limited look-ahead to the
choice of each edge.  We analyze it using the same model as in the previous
section: each edge receives an independent random label from $\Uniform(0,
1)$. For the purposes of intuition, we think of the label $f(e)$ of an edge
$e$ as the time at which $e$ appears in the graph.  We first describe how
the algorithm would run, given a (deterministic) full labeling of the edges
of $K_n$ with real numbers from $[0, 1]$.  

The challenge with any complex algorithm is dependency between iterations.
Our main innovation is to distill the algorithm and pose it in a way that
is particularly clean and amenable to analysis.

\subsection{\boldmath Algorithm $k$-greedy}
\label{sec:k-greedy}

\begin{enumerate}

  \item Initialize the path $P$ to be a single (arbitrary) vertex $v_1$.
    Initialize the rooted tree $T$ to be the 1-vertex tree with $v_1$ as
    the root.  Initialize the time $\tau$ to be 0.
    
  \item 
    \label{alg:repeat}
    While the rooted tree $T$ has fewer than $k$ edges, do:

    \begin{enumerate}
      \item Let $S$ be the set of all edges with one endpoint in $T$,
        the other endpoint not in $P \cup T$, and label at least $\tau$.

      \item If $S$ is empty, then terminate the algorithm.

      \item Identify the edge of $S$ with minimum label, add it to
        $T$, and set $\tau$ to be its label.
    \end{enumerate}

  \item The rooted tree $T$ now has exactly $k$ edges.  Among the
    children of the root, identify the child $x$ whose subtree (rooted at
    $x$) is the largest.  Extend $P$ by one edge to $x$, and set $T$ to be
    the subtree rooted at $x$.  This may substantially reduce the size of
    $T$, as it deletes all of the other subtrees, as well as the root.

  \item Go back to step (\ref{alg:repeat}).

\end{enumerate}

\begin{lemma}
  The path $P$ produced by the $k$-greedy algorithm is always a simple
  increasing path.
  \label{lem:k-greedy-sound}
\end{lemma}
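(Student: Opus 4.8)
The plan is to verify two invariants maintained throughout the execution of the algorithm: first, that the path $P$ is always a simple increasing path, and second, that the rooted tree $T$ is, at all times, a tree of ``increasing branches'' — meaning that along every root-to-leaf path in $T$, the labels strictly increase, and moreover every edge of $T$ has label at least $\tau$, where $\tau$ is the current time (in fact $\tau$ equals the maximum label seen so far in $T$, i.e.\ the label of the most recently added edge). I would also track that $P$ and $T$ share exactly one vertex, namely the last vertex of $P$, which is the root of $T$, and that the vertex sets are otherwise disjoint.

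First I would check that these invariants hold after step (1): $P$ is a single vertex (trivially a simple increasing path), $T$ is a single vertex, $\tau = 0$, and the shared-vertex condition holds vacuously. Then I would argue that each of the two ways the state changes preserves the invariants. In step (2c), we add to $T$ the minimum-label edge $e$ of $S$; by the definition of $S$, the label of $e$ is at least the old $\tau$, and since $\tau$ was the maximum label in $T$, every root-to-leaf path through the new edge still has strictly increasing labels (using that all labels are distinct, which holds a.s.\ — though for the combinatorial soundness claim we may simply assume the labeling is injective, or phrase it with $\ge$ replaced appropriately). The new edge has its non-$T$ endpoint outside $P \cup T$, so $T$ remains a tree (no cycle is created) and stays vertex-disjoint from $P$ except at the root. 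We then set $\tau$ to the label of $e$, restoring ``$\tau$ is the max label in $T$.''

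Next, for step (3): among the children $x$ of the root of $T$, we pick the one with the largest subtree, extend $P$ by the edge from the old last vertex (the root) to $x$, and reset $T$ to the subtree rooted at $x$. The key point is that the edge $(\text{root}, x)$ being appended to $P$ has a label strictly greater than the previous last edge of $P$: the previous last edge of $P$ was itself added in an earlier execution of step (3) as an edge $(\text{old root}, \text{old } x)$, and at that time $\tau$ was set to\,\dots\ — actually the cleanest bookkeeping is to note that the edge $(\text{root}, x)$ currently in $T$ was added to $T$ during step (2) at some point \emph{after} the previous step-(3) extension, hence its label exceeds the value of $\tau$ at the end of that previous step-(3) extension, and that value of $\tau$ was $\ge$ the label of the edge then appended to $P$. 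So the labels along $P$ strictly increase. Simplicity of $P$ is preserved because $x \notin P$ (the non-root endpoints of $T$ were required to avoid $P \cup T$ when added). Resetting $T$ to the subtree at $x$ preserves the increasing-branches property (a subtree of an increasing tree is increasing) and the shared-vertex condition ($x$ is now both the last vertex of $P$ and the root of $T$); we keep $\tau$ unchanged, and since all remaining edges of $T$ had labels $\ge$ their parents' and in particular $\ge$ the label of $(\text{root},x)$, which is the new ``starting label'' — one should double-check that $\tau \ge$ the max label in the new $T$ is \emph{not} what we need; rather we need every edge of the new $T$ to have label $\ge$ the label of the last edge of $P$, which holds since those edges descend from $(\text{root}, x)$ in the old increasing tree.

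The main obstacle — really the only subtle point — is the bookkeeping around the value of $\tau$ across the transition in step (3): one must be careful that $\tau$ is not reset there, so that the ``label $\ge \tau$'' condition used when growing $T$ in subsequent rounds is strong enough to guarantee the newly grown edges exceed the last edge appended to $P$. I would resolve this by maintaining the single clean statement: \emph{$\tau$ equals the largest label among all edges currently in $T$, and $\tau$ is at least the label of the last edge of $P$.} Both conjuncts are easily seen to survive steps (2c) and (3), and together they immediately give that the next edge appended to $P$ in a future step (3) exceeds the current last edge of $P$, completing the induction. Finally, termination (step 2b) and the overflow check are not needed for soundness — the claim is only about the path $P$ whenever the algorithm outputs it — so the induction above suffices.
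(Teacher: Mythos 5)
Your argument is correct and is essentially the paper's own induction: each edge added in step (2c) has label at least $\tau$, which dominates every label already in $P \cup T$, so all paths from $v_1$ through $T$ remain increasing, and simplicity follows because new endpoints are required to avoid $P \cup T$. One small correction to your ``clean statement'': after step (3) prunes $T$, $\tau$ is only an upper bound on (not necessarily equal to) the largest label in the new $T$, since the most recently added edge may lie in a discarded subtree — but the inequality $\tau \ge$ every label in $T$ is all your induction actually uses, so the proof stands as written once ``equals'' is weakened to ``is at least.''
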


\begin{proof}
  Consider any moment at which an edge $e$ is added to $T$.  Suppose that
  $e = xy$, where $x$ was previously in $T$, and $y$ is a new leaf of $T$.
  By construction, the label of $e$ is at least $\tau$, but the label of
  every other edge in $P \cup T$ is at most $\tau$ (and a.s.\ not equal to
  $\tau$).  So, by induction, at all times during the algorithm, all paths
  from the first vertex $v_1 \in P$ to any leaf of $T$ are increasing
  paths.  They are all simple paths because we only extend $T$ by edges to
  vertices not currently in $P \cup T$.
\end{proof}

\subsection{Managing revelation of randomness}

\label{sec:manage-randomness}

We now take a closer look at what information needs to be revealed at each
step in order to run the algorithm.  We find that Step (2b) requires a
yes/no answer, and Step (2c) requires the identification of a single edge,
together with its label.  Therefore, if we have access to an oracle which
reports this information upon request, we will be able to run the complete
algorithm.

The information revelation in Step (2b) is a minor issue which we can
easily sidestep.  We accomplish this via fictitious continuation: let the
oracle always answer that ``$S$ is nonempty'', and in the event that $S$ is
indeed empty, it will increase $\tau$ to 1, and return an arbitrary edge
from a leaf of $T$ to a vertex not in $P \cup T$ in Step (2c).  This is a
failure state.  In our analysis below, we will run the algorithm for a
predetermined number of steps, and show that with high probability, $\tau$
is still bounded below 1, because the likelihood of a failure in Step (2b)
is highly unlikely.

We will carefully describe how we manage the exposures in Step (2c).  At
the beginning, the labels on all edges are independent, and each is
uniformly distributed in $[0, 1]$.  Consider the exposure the first time
Step (2c) is encountered.  The oracle reports an edge $v_1 x$ and its
label, and so at this point, the label of $v_1 x$ is certainly determined.
Let us refer to it as $\tau_1$.  We also learn some information about all
other edges $v_1 y$, with $y \not \in \{v_1, x\}$: their labels are not in
the range $[0, \tau_1)$.  We do not learn any restrictions on any the
labels of any other edges.  Importantly, the labels on all edges are
still independent, and uniformly distributed over their
(possibly-restricted) ranges.  They just are not identically distributed.
  
Consider the second time the algorithm encounters Step (2c).  Now, the
oracle reports another edge, say $xz$, and suppose its label is $\tau_2$.
Then, we know that all edges $v_1 y$ and $xy$ with $y \not \in \{v_1, x,
z\}$ have labels outside of the interval $(\tau_1, \tau_2]$.  We already
learned that some of those edges had labels outside $[0, \tau_1)$; for
those, we now know that their labels avoid $[0, \tau_2)$.  Again, all
  labels are still independent and uniformly distributed over their ranges.

So, at every intermediate time $\tau$ during the course of the algorithm,
some edges will have their labels determined, but independence between
non-determined labels is preserved throughout.  Each non-determined label
is still uniformly distributed over some range of the form $[0, 1]
\setminus (I_1 \cup I_2 \cup \ldots \cup I_k)$, where the $I_i$ are
disjoint sub-intervals.  Note that all ranges still completely include
$[\tau, 1]$, and so this phenomenon actually works in our favor, because it
increases the likelihood that we can still use the edge: Step (2a) queries
only edges with labels at least $\tau$.  Since all non-determined edges
have label ranges including $[\tau, 1]$, they satisfy this property with
probability exactly $(1-\tau) / \mu$, where $\mu$ is the measure of their
current range.  This is clearly worst when $\mu = 1$, which corresponds to
the fully unrestricted $[0, 1]$ range.

\subsection{Intuitive calculation}
\label{sec:intuitive}

Now that we have a clean model which definitively indicates how much
randomness is surrendered at each step, we conduct a rough analysis which
captures the main structure of the argument.  This will also derive the
constant in Theorem \ref{k-greedy}.  The key statistic to estimate is the
typical \emph{waiting time}, which we define to be the difference between
the labels of successive edges added to $T$ by Step (2c) of the algorithm.

Suppose that at some stage of the algorithm, the path has length $\ell$,
and the tree $T$ has $j$ vertices.  If all of the $j (n-\ell-j)$ edges
between $T$ and vertices outside $P \cup T$ still had labels which were
uniformly distributed over $[0, 1]$, then the waiting time would be the
minimum of that many random variables $\Uniform(0, 1)$.  The waiting time
would then be exactly $\frac1{j(n-\ell-j) + 1}$ in expectation.  In our
situation, this is not exactly true.  We still have independence, but the
labels are distributed uniformly over sub-ranges of $[0, 1]$.  Also, some
of those $j(n-\ell-j)$ edges could potentially already have their labels
determined, if they had previously been added as edges to $T$ in an earlier
stage of the algorithm, but were discarded by some Step (3).  As mentioned
at the end of Section \ref{sec:manage-randomness}, the first issue is in
our favor, because it only reduces the waiting time.  The second issue
works against us, because it reduces the number of independent random
labels that are competing in Step (2c), but we will show in Section
\ref{sec:k-greedy-rigorous} that in fact both of these effects are
negligible.  So, we first analyze the (fictitious) ideal case.

For now, let us proceed using $\frac1{j(n-\ell-j) + 1} \sim
\frac{1}{n-\ell} \cdot \frac{1}{j}$ as the expected waiting time.  Then,
the expected time until the search tree fills up from $j-1$ to $k$ edges is
asymptotically
\[
	\frac1{n-\ell} \cdot \left( \frac1j + \frac1{j+1} + \cdots +
        \frac1{k}\right) \sim \frac1{n-\ell} \log \frac kj.
\]
At this point, the increasing path is extended by 1, and the search tree
shrinks to the largest subtree determined by a child of the root. From the
formula above, we see that only the size of this subtree is of interest.

In our ideal setting, when a potential edge is added to a search tree with
$j$ vertices, its endpoint in the search tree is randomly distributed
uniformly over all $j$ vertices currently in the tree.  From the point of
view of subtree sizes among children of the root, it therefore starts a new
subtree with probability $\frac1j$ (if its tree-endpoint is the root
itself), or is added to one of the existing children's subtrees with
probability proportional to current size (depending on which subtree its
tree-endpoint is in).  This is equivalent to the Chinese restaurant
process, which generates the cycle decomposition of a uniformly random
permutation: if $\pi$ is a uniformly random permutation of $\{1, \dots,
j-1\}$, then we can transform $\pi$ into a uniformly random permutation of
$\{1, \dots, j\}$ by making $j$ a fixed point with probability $\frac1j$,
and otherwise inserting $j$ in a uniformly chosen point in any cycle (which
means a cycle of length $i$ is chosen with probability $\frac ij$).
Therefore the number of vertices in the largest subtree has the same
distribution as a well-studied random variable: the length $L_k$ of the
longest cycle in a uniformly random permutation of $\{1, \dots, k\}$.

Define the random variable $A_k = \frac1{L_k} + \frac1{L_k +1} + \cdots +
\frac1k \sim -\log \frac{L_k}{k}$, and define the constant $\alpha_k =
\E[A_k]$.  Let $X_\ell$ be the waiting time for the increasing path to grow
from length $\ell$ to length $\ell+1$.  From what we have shown,
$\E[X_\ell] \sim \frac{\alpha_k}{n-\ell}$. As in the analysis of the greedy
algorithm, $\E[X_\ell^2] = O(\frac1{(n-\ell)^2})$, where the dependence on
the constant $k$ is absorbed into the big-$O$.  This shows that
asymptotically almost surely,
\[
	\sum_{i=1}^\ell X_i \sim \E \left[\sum_{i=1}^\ell X_i \right].
\]
We determine the length of the path by finding the point at which this expected value reaches $1$:
\[
	1 = \sum_{i=1}^\ell \E[X_i] \sim \alpha_k \left(\frac1n + \cdots + \frac1{n-\ell}\right)  \sim \alpha _k\log \frac{n}{n-\ell}.
\]
Therefore the algorithm typically achieves a length $\ell$ such that $\frac
\ell n \sim 1- e^{-1/\alpha_k}$.

\subsection{Determination of constant}

In order to determine the numerical bounds in Theorem \ref{k-greedy}, we
must understand $\alpha_k$.  In~\cite{golomb98}, a recurrence relation is
given for the number of permutations of $\{1, \dots, n\}$ with greatest
cycle length $s$. Using our notation, we present a modified version of this
recurrence: if $L_n$ is the length of the longest cycle in a random
permutation of $\{1, 2, \dots, n\}$, then for $1 \le s \le n$,
\[
	\Pr[L_n = s] = \sum_{j=1}^{\floor{n/s}} \frac{1}{j! \, s^j} \Pr[L_{n-sj} \le s-1],
\]
where $L_0$ is the constant $0$ whenever it occurs. This recurrence allows for an exact numerical
computation of $\alpha_k = \E[A_k]$ for any $k$. Several seconds of
computation are enough to confirm that $\alpha_{100} < 0.523$, which
implies that the 100-greedy algorithm typically finds an increasing path of
length at least $cn$, where $c > 1 - e^{-1/0.523} > 0.85$.

It is natural to wonder whether a particular finite choice of $k$ would be
optimal for the $k$-greedy algorithm.  Using a careful coupling argument,
we can show that $\alpha_k$ is monotone decreasing with respect to $k$: if
we consider $L_k$ and $L_{k+1}$ as stages in the same Chinese restaurant
process, we have
\[
	\E[A_{k+1} - A_k \mid L_k] \le \frac1{k+1} - \frac1{L_k} \cdot \frac{L_k}{k+1} = 0
\]
since with probability at least $\frac{L_k}{k+1}$, the longest cycle increases in length. Therefore as $k \to \infty$, $\alpha_k$ approaches some constant $\alpha$.

Therefore, no finite $k$ is optimal.  Since the Golomb--Dickman constant
$\lim_{k \to \infty} \E[\frac{L_k}{k}] \approx 0.6243$ has no closed form,
we expect the same to be true for $\alpha = \lim_{k \to \infty} \E[-\log
  \frac{L_k}{k}]$.  Our numerical methods estimate $\alpha \approx 0.5219$,
  so our choice of $k=100$ already achieves bounds which are close to
  optimal for $k$-greedy algorithms.

\subsection{Rigorous analysis}
\label{sec:k-greedy-rigorous}

There are two obstacles in the way of the uniformity we assumed for this
analysis. On the one hand, we may expose potential edges, add them to the
search tree, but fail to use them (deleting them from the search tree as we
pass to a subtree), and then encounter these edges again, which increases
the waiting time because these edges can't be added to to the search tree.
On the other hand, when a minimal edge is found, all other edges we
consider gain negative information: their label is not within some range
$[t_1, t_2]$, which only helps us because we have $t \ge t_2$ from that
point on. When these edges are considered a second time, the waiting time
decreases.  In this section, we show that both of these obstacles are
asymptotically irrelevant.  In our discussion, an \emph{exposed}\/ edge is
one whose label has been completely determined.  Even if an edge label has
received negative information, we still call it \emph{unexposed}.

\begin{lemma}
  \label{lemma:asymptotic-uniformity}
  With probability $1-o(1)$, throughout the entire time during which the
  path grew to length $0.99n$, the following conditions held:
  \begin{enumerate}
    \item Each vertex of the graph was incident to at most $o(n)$ exposed
      edges.
    \item For each unexposed edge of the graph, the total length of the
      intervals of negative information was $o(1)$.
  \end{enumerate}
\end{lemma}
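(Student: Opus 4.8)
The plan is to control, for each vertex $w$, how often $w$ takes part in the edge-exposure process; both conditions then follow. First observe that, up to the moment the path reaches length $0.99n$, Step (3) is executed at most $0.99n$ times, and between two consecutive executions of Step (3) there are at most $k$ executions of Step (2c): right after Step (3) the tree $T$ has some $j_0\le k-1$ edges, and only the $k-j_0\le k$ missing edges are then added. Counting also the one initial fill, the total number $M$ of Step (2c) operations -- equivalently, of exposed edges -- is $O(kn)=O(n)$. Thus the \emph{average} number of exposed edges at a vertex is already $O(1)$; condition (1) asks for a bound uniform over all $n$ vertices, which the plan is to obtain in the sharper form $O(\log n)$.

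For condition (1), I would fix $w$ and call $w$ \emph{captured} each time it is added to $T$ as a new leaf, i.e.\ whenever $w$ is the endpoint outside $P\cup T$ of the edge chosen in Step (2c); let $b_w$ count the captures. The first, purely deterministic, step is the bound: the number of exposed edges incident to $w$ is $O(k^2)(b_w+1)$. Indeed, an exposed edge incident to $w$ is exposed either at an operation that captures $w$ (at most $b_w$ of these) or at an operation with $w$ already in $T$; the maximal time-intervals during which $w\in T$ (``episodes'') number at most $b_w+1$; and within one episode, $w$ enters $T$ at depth $\le k$, its depth decreases by exactly one at each Step (3) it survives, and $w$ leaves $T$ (discarded, or promoted to the root and hence onto $P$) once its depth reaches $0$ -- so an episode spans $O(k)$ executions of Step (3), hence $O(k^2)$ executions of Step (2c). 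Condition (1) is thereby reduced to: $b_w=o(n)$ for every $w$, with probability $1-o(1)$.

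To bound $b_w$, I would estimate the conditional probability of capturing $w$ at a single Step (2c) operation, given everything exposed so far. While the path has length $<0.99n$ there are $\ge 0.01n-O(k)$ vertices outside $P\cup T$; by Section \ref{sec:manage-randomness} the unexposed edges of $S$ have labels that, conditioned on exceeding the current threshold $\tau$, are i.i.d.\ uniform on $[\tau,1]$; at most $k$ of these edges touch $w$, while at least $0.005n$ do not -- \emph{provided} every tree vertex is incident to fewer than $0.004n$ exposed edges. On that proviso, $w$ is captured via a freshly exposed edge with conditional probability $O(1/n)$. (The remaining possibility -- that the minimum of $S$ is an already-exposed edge incident to $w$ -- requires that edge to still have label $\ge\tau$; since $\tau$ has risen since the edge was exposed, this is possible only within a short window, and contributes a lower-order term absorbed into the same estimate.) The proviso is precisely condition (1), so the circularity is resolved by a stopping-time bootstrap: let $\sigma$ be the first Step (2c) operation after which some vertex is incident to $\ge 0.004n$ exposed edges. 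Before time $\sigma$ every capture probability is $O(1/n)$, so the number $\tilde b_w$ of captures of $w$ occurring before time $\sigma$ is stochastically dominated by a $\mathrm{Binomial}(M,O(1/n))$ variable of mean $O(1)$; hence $\Pr[\tilde b_w\ge\log n]=n^{-\omega(1)}$, and a union bound over the $n$ vertices shows that with probability $1-o(1)$ every $\tilde b_w\le\log n$. On that event the second paragraph gives every vertex at most $O(k^2\log n)=o(n)$ exposed edges before time $\sigma$, which forces $\sigma=\infty$ and hence $\tilde b_w=b_w$ for all $w$. Condition (1) follows.

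Condition (2) should then follow quickly. For an unexposed edge $uv$, each maximal interval of negative information is accrued at a Step (2c) operation with $uv\in S$ -- hence with $u\in T$ or $v\in T$ -- and its length equals the increment $\delta_t$ of $\tau$ at that operation (the waiting time); so the total negative-information length on $uv$ is at most $\sum_{t:\,u\in T}\delta_t+\sum_{t:\,v\in T}\delta_t$. By condition (1), at every operation each tree vertex still has $\Omega(n)$ unexposed edges to the outside, and each such edge independently has label in $[\tau,\tau+x]$ with probability $\ge x$ (while $\delta_t\le 1-\tau$ always), so $\Pr[\delta_t>x]\le e^{-\Omega(nx)}$, i.e.\ $\delta_t$ is conditionally dominated by an $\mathrm{Exp}(cn)$ variable for a constant $c>0$. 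Since the number of operations with $u\in T$ is $O(k^2)(b_u+1)=O(k^2\log n)$ with probability $1-o(1)$ by the proof of (1), $\sum_{t:\,u\in T}\delta_t$ is dominated by a sum of $O(\log n)$ independent $\mathrm{Exp}(cn)$ variables, which is $O(\tfrac{\log n}{n})=o(1)$ except with probability $n^{-\omega(1)}$; a union bound over the $O(n^2)$ ordered pairs completes the proof. The single genuine obstacle is exactly this interdependence of the two conditions -- every waiting-time and capture estimate presupposes that no vertex has yet amassed too many exposed edges -- which is what the stopping-time argument handles; beyond that, the proof combines the randomness bookkeeping of Section \ref{sec:manage-randomness}, the subtree picture of Section \ref{sec:intuitive}, and Chernoff/Gamma tail bounds with a union bound, the weak targets $o(n)$ and $o(1)$ leaving ample room.
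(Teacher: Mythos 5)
Your overall plan is the same as the paper's: bound the number of times each vertex enters the search tree, use the deterministic fact that a single stay in the tree spans only $O(k^2)$ executions of Step (2c), get an $O(1/n)$ per-step capture probability, and resolve the circularity by a first-failure/bootstrap argument; your write-up just makes the Chernoff-type and union-bound steps explicit. The genuine gap is in the capture-probability estimate, and in your closing claim that the only interdependence runs through the exposed-edge counts. The minimum-label edge of $S$ is \emph{not} uniform over the unexposed candidate edges: conditioned on the history, an unexposed candidate $e$ belongs to $S$ with probability $(1-\tau)/\mu_e$, where $\mu_e$ is the measure of its remaining range, and only conditioned on membership are the labels i.i.d.\ uniform on $[\tau,1]$. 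Hence an edge to $w$ that has accumulated a lot of negative information (small $\mu_e$) is disproportionately likely to be the minimum. Carrying out the computation, the probability of capturing $w$ at one step is at most about $(k+1)\big/\big(0.005\,n\,\mu_{\min}\big)$, where $\mu_{\min}$ is the smallest range among the unexposed edges joining $T$ to $w$; this is $O(1/n)$ only if $\mu_{\min}=\Omega(1)$, i.e.\ only under a negative-information bound. If $1-\tau=o(1)$ --- which certainly occurs well before the path reaches length $0.99n$ --- and some edge to $w$ has $\mu_e$ comparable to $1-\tau$, the capture probability can be $\omega(1/n)$ (for $1-\tau=\Theta(1/n)$ it can even be $\Theta(1)$). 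Your stopping time $\sigma$ monitors only exposed-edge counts, and you derive condition (2) only \emph{after} condition (1), so nothing rules this out at the point where you invoke ``$O(1/n)$ per step.'' This is exactly why the paper bootstraps the two conditions together (``since all edges have $o(1)$ negative information, their probabilities of having the next smallest label are asymptotically equal'').

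The fix is small and keeps your structure intact: enlarge $\sigma$ to also trigger when some unexposed edge has accumulated negative information $\ge 0.1$ (say). Before this joint stopping time the capture probability is $O(1/n)$ as you claim, and your waiting-time tail bound $\Pr[\delta_t>x]\le e^{-\Omega(nx)}$ needs only the exposed-edge control, so before $\sigma$ each edge's negative information is a sum of $O(k^2\log n)$ exponentially dominated increments and is $O(\log n/n)$ except with probability $n^{-\omega(1)}$; together with your binomial bound on captures this forces $\sigma=\infty$ a.a.s., and both conclusions follow simultaneously, exactly as in the paper. (Two minor remarks: the parenthetical worry about an already-exposed edge re-entering $S$ is vacuous, since exposed labels are at most the current $\tau$; and, like the paper, you are implicitly working up to the fictitious-continuation failure time, which is the regime in which the lemma is actually applied.)
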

\begin{proof}
  First suppose that no vertex has appeared in the search tree for more
  than $O(\log n)$ steps of the algorithm. Then the conclusions of the
  lemma follow:
  \begin{enumerate}
    \item A vertex acquires an exposed edge either as it joins the search
      tree, or when it's already in the search tree and acquires a child,
      and there are $O(\log n)$ such steps. Therefore no vertex has more
      than $O(\log n)$ exposed edges.
    \item An edge acquires negative information only when one of its
      endpoints is in the search tree, which occurs $O(\log n)$ times. Each
      of those times, that endpoint had at least $0.01 n$ edges to vertices
      outside the increasing path, and only $O(\log n)$ of these are
      exposed. Therefore $O(n)$ edges are always available to choose from,
      and the waiting time is $O(1/n)$ in expectation.  Therefore the total
      waiting time the edge observes, which is equal to the negative
      information it acquires, is $o(1)$.
  \end{enumerate}
  When the algorithm begins, it's certainly true that no vertex has
  appeared in the search tree for more than $O(\log n)$ steps. Therefore
  our conclusions initially hold. Together they imply that at every step,
  there are $O(n)$ vertices which could potentially enter the search tree;
  since all edges have $o(1)$ negative information, their probabilities of
  having the next smallest label are asymptotically equal, and so no vertex
  has more than an $O(1/n)$ chance of being chosen. The algorithm runs for
  at most $kn$ steps, so with high probability no vertex enters the search
  tree more than $O(\log n)$ times.

  Once a vertex enters the search tree, it stays there for at most $k^2$
  steps: initially, its level in the tree is at most $k$, and at intervals
  of at most $k$ steps, the search tree is replaced by a subtree and so the
  vertex either leaves the search tree or has its level reduced by $1$.
  Therefore no vertex appears in the search tree for more than $O(\log n)$
  steps, and the conclusions follow.
\end{proof}

Let $\cF_\ell$ be the $\sigma$-algebra generated by the information
revealed at the time the path reaches length $\ell$, and recall that
$X_\ell$ is the waiting time for the increasing path to grow from length
$\ell$ to $\ell+1$, so that $X_\ell$ is $\cF_{\ell+1}$-measurable. Choose $\epsilon > 0$, and let $T
= (1 - e^{-(1-3\epsilon)/\alpha_k})n$.  Define the stopping time $\tau$ to be the
lesser of $T$, or the first $\ell$ for which Lemma~\ref{lemma:asymptotic-uniformity} fails, or for which the waiting time
$X_\ell$ exceeds $\frac{2 \log n}{n}$, or for which the total waiting time
$X_0 + \cdots + X_\ell$ exceeds $1 - \epsilon$.  These conditions are chosen so
that we will have $\tau = T$ a.a.s.; let us assume that for now, and
establish it later.  Define the martingale $(Z_t)$ as follows.  Let $Z_0 =
0$, and for each $t < \tau$, let $Z_{t+1} = Z_t + X_t - \E[X_t \mid
  \cF_t]$.  For each $t \geq \tau$, let $Z_{t+1} = Z_t$.

We next study the successive martingale differences $Z_{\ell+1} - Z_\ell =
X_\ell - \E[X_\ell \mid \cF_\ell]$.  For this, it is helpful to identify
that the most critical information from $\mathcal{F}_\ell$ is the number of
vertices in the search tree at the time the path reaches length $\ell$.  

\begin{lemma}
  \label{lemma:vertex-waiting-time}
  Suppose that $\ell < \tau$ (our stopping time).  Let $A_{\ell, s}$ be the
  event that at the time the path reaches length $\ell$, the search tree
  contains $s$ vertices.  Then $\E[X_\ell \mid \mathcal{F}_\ell, A_{\ell,
  s}] \sim \frac1{n-\ell} \sum_{i=s}^k \frac{1}{i}$.
\end{lemma}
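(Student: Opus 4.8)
The plan is to analyze the filling of the search tree from $s$ vertices up to $k$ edges (hence up to $k$ vertices, assuming the path does not stop, which is guaranteed before the stopping time $\tau$), conditioning on $A_{\ell,s}$. I would set up the computation by tracking the sequence of waiting times for the search tree to grow from $j$ vertices to $j+1$ vertices, for $j = s, s+1, \dots, k$. When the tree has $j$ vertices and the path has length $\ell$, there are $j(n-\ell-j)$ candidate edges between $T$ and vertices outside $P \cup T$, and by the analysis of Section~\ref{sec:manage-randomness}, conditioned on $\cF_\ell$ and on which of these edges are already exposed, the increment to $\tau$ contributed at this stage is the minimum of a collection of independent (but not identically distributed) random variables whose label ranges all include $[\tau, 1]$. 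In the idealized case where all $j(n-\ell-j)$ edges are unexposed with full range $[0,1]$, this minimum-over-the-current-threshold has mean $\frac{1-\tau}{j(n-\ell-j)+1} \sim \frac{1}{n}\cdot\frac{1}{j(n-\ell-j)} \sim \frac{1}{n-\ell}\cdot\frac{1}{j}$ since $j = O(1)$, $\tau = o(1)$ (we are before the stopping time, so $\tau \le 1-\epsilon$ and in fact the total waiting time so far is bounded, and each individual increment is $O(\log n / n)$), and $n - \ell - j \sim n - \ell$. Summing over $j$ from $s$ to $k$ gives $\E[X_\ell \mid \cF_\ell, A_{\ell,s}] \sim \frac{1}{n-\ell}\sum_{i=s}^{k}\frac{1}{i}$.

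The two discrepancies from the idealized case are exactly the two obstacles controlled by Lemma~\ref{lemma:asymptotic-uniformity}, which holds because $\ell < \tau$. First, some of the $j(n-\ell-j)$ candidate edges may already be exposed (having been added to and then discarded from an earlier search tree); but by part~(1) of Lemma~\ref{lemma:asymptotic-uniformity}, each vertex is incident to only $o(n)$ exposed edges, so at least $j(n-\ell-j) - o(n) \sim j(n-\ell)$ of the candidate edges are unexposed, and the minimum over this slightly smaller collection has mean $(1+o(1))$ times the idealized value — the $o(n)$ loss is negligible against the $\Theta(n)$ total. Second, the unexposed candidate edges have label ranges that are subsets of $[0,1]$ rather than all of $[0,1]$; but by part~(2) of Lemma~\ref{lemma:asymptotic-uniformity}, the total negative-information length on each such edge is $o(1)$, so each range has measure $1 - o(1)$, and the density of each such variable near the current threshold is $(1+o(1))$ times uniform. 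Both effects therefore multiply the relevant conditional mean by $1 + o(1)$, uniformly over $s \le j \le k$ (there are only boundedly many values of $j$), so they do not affect the asymptotic. One should also note that all candidate-edge labels exceed $\tau$ with probability $(1-\tau)/\mu = 1 - o(1)$ each since $\mu = 1 - o(1)$, so Step~(2a) discards essentially none of them.

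The main obstacle is the bookkeeping needed to make the conditional-expectation argument clean: when we compute the increment from $j$ to $j+1$ vertices, the set of already-exposed candidate edges and the negative-information intervals on the unexposed ones depend on the algorithm's history up to that point, so one must argue that conditioned on $\cF_\ell$, $A_{\ell,s}$, \emph{and} this intermediate history, the waiting-time increment still has the claimed asymptotic mean — and then integrate out. The key point is that all the conditioning only ever (i) shrinks ranges in ways that push labels up above $\tau$ (helping us, bounded by Lemma~\ref{lemma:asymptotic-uniformity}(2)), or (ii) removes at most $o(n)$ edges from competition (bounded by Lemma~\ref{lemma:asymptotic-uniformity}(1)); neither can change $\Theta(n)$-many competitors by more than a $1+o(1)$ factor. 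I would phrase the argument as: for any realization consistent with $\ell < \tau$, the conditional mean of the increment lies between $(1-o(1))\frac{1}{n-\ell}\cdot\frac1j$ and $(1+o(1))\frac{1}{n-\ell}\cdot\frac1j$, where the $o(1)$ is uniform; summing the $k - s + 1 = O(1)$ increments and taking expectation over the intermediate history then yields the lemma. A minor technical point worth stating explicitly is that before $\tau$ the search tree never stops (Step~(2b) never triggers a genuine empty $S$), so the tree really does reach $k$ edges, which is what lets the sum run all the way to $i = k$.
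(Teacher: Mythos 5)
Your decomposition is the same as the paper's (grow the tree one vertex at a time from $s$ vertices up to $k$ edges, treat each increment as the minimum of roughly $j(n-\ell)$ independent near-uniform labels, control exposed edges and negative information via Lemma~\ref{lemma:asymptotic-uniformity}, and sum over $j=s,\dots,k$), but there is a genuine error at the central calculation. You assert that the ideal-case mean of one increment is $\frac{1-\tau}{j(n-\ell-j)+1}$ and then reduce this to $\frac{1}{j(n-\ell)}$ by claiming $\tau=o(1)$. The second claim is false: the current threshold $\tau$ (the label of the last edge added to the tree) equals the cumulative waiting time, which is $\Theta(1)$ once $\ell=\Theta(n)$ --- the stopping time only guarantees $\tau\le 1-\epsilon$, not smallness; indeed the entire analysis is about when this quantity reaches $1$. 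For the same reason, your remark that each candidate edge survives Step (2a) with probability $(1-\tau)/\mu = 1-o(1)$ is wrong: roughly a $\tau$-fraction of the candidates is discarded at each stage.

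What saves the lemma is that your first formula is also wrong, in a way that exactly cancels: the correct ideal-case mean carries no factor of $1-\tau$. With $N=j(n-\ell-j)$ unexposed candidates whose labels are uniform on $[0,1]$, one has $\Pr\bigl[\min\{f(e): f(e)\ge\tau\}-\tau > x\bigr]=(1-x)^N$ for $0\le x\le 1-\tau$, so the expected increment is $\frac{1-\tau^{N+1}}{N+1}=(1+o(1))\frac{1}{N+1}$, independent of $\tau$: each edge clears the threshold only with probability $1-\tau$, but conditionally on clearing it its label is uniform on $[\tau,1]$, and the two effects cancel. Equivalently, the paper models each edge's waiting time as uniform on $[0,1-t]$, where $t$ is that edge's total negative information, not as uniform on $[0,1-\tau]$. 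This is not a cosmetic point: if the waiting time really scaled with $1-\tau$, the statement of the lemma (and the constant $\alpha_k$ downstream) would be different, so the computation must be done this way rather than patched by an appeal to $\tau=o(1)$, which cannot be repaired. With that step corrected, the remainder of your argument --- at most $o(n)$ exposed candidate edges per tree vertex, $o(1)$ negative information per unexposed edge, uniformity of the $o(1)$ errors over the $O(1)$ values of $j$, conditioning on the intermediate history and integrating out, and summing the increments for $j=s,\dots,k$ --- coincides with the paper's proof.
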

\begin{proof}
  It suffices to show that if the search tree currently contains $i$
  vertices, then the expected waiting time until the search tree contains
  $i+1$ vertices is $(1+o(1)) \frac{1}{i(n-\ell)}$.  To this end, recall
  that in the ideal case, there are $j(n-\ell-i)$ edges to choose from,
  each associated with a $\Uniform(0,1)$ waiting time, and the minimum
  of $i(n-\ell-i)$ waiting times has expected value
  $\frac1{i(n-\ell-i)+1}$, which is $(1+o(1)) \frac{1}{i(n-\ell)}$ since
  $\ell < \tau$ implies that $n - \ell$ is still linear in $n$, while $i
  \leq k$ is constant.

  In reality, the edge labels are not $\Uniform(0,1)$, but if we've
  revealed that an edge's label is not contained in some intervals of total
  length $t$, we can still model the waiting time for that edge as
  $\Uniform(0, 1-t)$: the exact location of the intervals is
  irrelevant, since we will never look at labels with that value anyway.
  Since $\ell < \tau$, any edge we look at has $o(1)$ negative information
  total, so its associated waiting time is uniform on an interval of length
  $1 - o(1)$.  As in the ideal case, the minimum of $i(n-\ell-i - o(n))$
  such random variables has expected value $(1+o(1))\frac1{i(n-\ell)}$,
  and summing over $i$ as the search tree grows from $s$ to $k$ vertices,
  we establish the lemma.
\end{proof}

In light of Lemma~\ref{lemma:vertex-waiting-time}, our stopping time $\tau$
ensures that $(Z_t)$ is Lipschitz with successive differences bounded by $L
= \frac{2 \log n}{n}$.  Hence by the Azuma--Hoeffding inequality, we have
$Z_T \leq \frac{\log^2 n}{\sqrt{n}}$ a.a.s.  Also, since $\tau = T$ a.a.s.,
by unraveling the construction of $(Z_t)$, we see that a.a.s., the total
waiting time satisfies
\begin{displaymath}
  X_0 + X_1 + \cdots + X_T 
  \leq 
  \frac{\log^2 n}{\sqrt{n}}
  +
  \sum_{\ell = 0}^T \E[X_\ell \mid \cF_\ell] ,
\end{displaymath}
where the sum of conditional expectations on the right is itself another
random variable, which we must control.

Let $S_\ell$ be the following $\mathcal{F}_\ell$-measurable random
variable: if our stopping time has not occurred at the time our path
reaches length $\ell$, then let it count the number of vertices in the
search tree.  Otherwise, let it be a completely independent random
variable, distributed as the length of the longest cycle in a uniformly
random permutation of $\{1, \ldots, k\}$.  Define the random variables
\begin{displaymath}
  Y_\ell
  =
  \frac{1}{n-\ell}
  \sum_{i=S_\ell}^k \frac{1}{i} .
\end{displaymath}
By Lemma~\ref{lemma:vertex-waiting-time}, the total waiting time is then
a.a.s.\ at most
\begin{displaymath}
  X_0 + X_1 + \cdots + X_T 
  \leq 
  \frac{\log^2 n}{\sqrt{n}}
  +
  (1+o(1)) \sum_{\ell = 0}^T Y_\ell
  \leq
  \sum_{\ell = 0}^T Y_\ell + \epsilon.
\end{displaymath}
Therefore, it now remains to show that a.a.s.,
\begin{displaymath}
  \sum_{\ell = 0}^T Y_\ell
  \leq
  1-2\epsilon.
\end{displaymath}
This will imply that $X_0 + \cdots + X_T \le 1-\epsilon$ and therefore $\tau=T$.

Again, we have an adapted process in which each $Y_\ell$ is
$\cF_\ell$-measurable, but it helps to study the $Y_\ell$ (or equivalently,
$S_\ell$) with respect to the coarsest possible filtration.  Specifically,
to observe $S_\ell$, we now only need to watch the evolution of the search
tree, and crucially, we may proceed by revealing only the number of
vertices in the subtree of each child of the root.  If we reveal these
numbers at every step when an edge is added to the search tree, then in the
ideal case, each subtree receives the edge with probability proportional to
its size, and we have exactly the Chinese Restaurant Process.  When we
reach $k$ edges and pass to the largest subtree, we reveal the next level
of subtree size information.  In the ideal case, conditioned on the
previous partition and the size of the new search tree, when we reveal the
new partition of subtree sizes, the distribution is precisely a new and
independent Chinese Restaurant Process.  It turns out that reality is not
far off.  Let $\cG_\ell$ be the $\sigma$-algebra generated by $\{S_0,
\ldots, S_\ell\}$, i.e., the natural filtration.

\begin{lemma}
  \label{lemma:chinese-distribution}
  For each $\ell \leq T$, $\Pr[S_\ell = s \mid \cG_{\ell-1}] = (1+o(1))
  \Pr[L_k = s]$, where $L_k$ is the length of the longest cycle in a
  uniformly random permutation of $\{1, \dots, k\}$.
\end{lemma}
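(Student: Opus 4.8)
The plan is to track the evolution of $S_\ell$ by revealing only the partition of the current search tree into subtrees hanging off the children of the root, together with the fictitious continuation convention after the stopping time. Fix $\ell \le T$ and condition on $\cG_{\ell-1}$; if the stopping time has already occurred, then by definition $S_\ell$ is an independent copy of $L_k$ and the claim is exact, so assume we are before $\tau$. The search tree at length $\ell$ was obtained as follows: starting from the single-vertex tree (the child $x$ chosen in the previous Step (3), which had some size $S_{\ell-1}$ subtree under it), we repeatedly added edges in Step (2c) until the tree had $k$ edges, and then we passed to the largest child-subtree of the root. So $S_\ell$ is a deterministic function of the sequence of attachment choices made during that growth phase, and I claim that in the ideal Chinese-restaurant model, those attachment choices reproduce exactly the cycle-insertion dynamics: when the tree has $i$ vertices, the next edge attaches its tree-endpoint uniformly among those $i$ vertices, starting a new child-subtree with probability $1/i$ (attachment at the root) and otherwise joining an existing child-subtree with probability proportional to its size. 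This is precisely the recursive construction of a uniformly random permutation of $\{1,\dots,k\}$ via its cycle decomposition, so the induced distribution of the largest child-subtree is exactly $\Pr[L_k = s]$.

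The substance of the argument is to show that reality differs from this ideal model by only a $(1+o(1))$ factor on each attachment probability, uniformly over the $O(1)$ attachment steps. Two effects must be controlled, and both were quantified in Lemma \ref{lemma:asymptotic-uniformity} and used already in Lemma \ref{lemma:vertex-waiting-time}. First, when the tree has $i$ vertices there are $i(n-\ell-i)$ candidate edges in the ideal count, but $o(n)$ of the edges at any given tree-vertex may already be exposed (from a discarded earlier subtree), so the true number of competing unexposed edges at tree-vertex $v$ is $(n-\ell-i) - o(n) = (1+o(1))(n-\ell-i)$, the $o(n)$ being uniform across vertices. Second, the unexposed edges are not $\Uniform(0,1)$ but uniform on $[0,1]$ minus intervals of total length $o(1)$, all of which lie below the current time $\tau$; since Step (2a) only ever queries labels in $[\tau,1]$, the relevant conditional law of each candidate's label, given that it exceeds $\tau$, is uniform on an interval of length $1-o(1)$, again uniformly. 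Consequently, the probability that the minimizing edge attaches at a particular tree-vertex $v$ equals (number of live edges at $v$) divided by (total number of live edges), up to $(1+o(1))$ corrections from the slightly unequal interval lengths — and because every vertex of the current tree contributes $(1+o(1))(n-\ell-i)$ live edges, that ratio is $(1+o(1))\cdot \frac{1}{i}$, matching the ideal attachment-to-root-or-specific-vertex probability. Summing these per-vertex probabilities over the vertices lying in a given child-subtree gives the attach-to-that-subtree probability as $(1+o(1))$ times size$/i$, exactly the Chinese-restaurant transition.

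With each of the $O(1)$ attachment transitions correct up to a $1+o(1)$ multiplicative factor, and with the final ``pass to the largest subtree'' step being deterministic given the partition, composing the transitions shows that the law of the resulting partition — and hence of $S_\ell = $ (size of largest child-subtree) — agrees with the law of the cycle-partition of a uniform permutation of $\{1,\dots,k\}$ up to a $1+o(1)$ factor on each atom, which is the assertion $\Pr[S_\ell = s \mid \cG_{\ell-1}] = (1+o(1))\Pr[L_k=s]$. The main obstacle, and the point that needs the most care, is making the ``uniformly over all $O(1)$ steps and all tree-vertices'' quantifiers precise: one must invoke that $\ell < \tau$ forces $n-\ell$ to remain linear in $n$ and Lemma \ref{lemma:asymptotic-uniformity} to hold, so that the per-step $o(n)$ and $o(1)$ errors can be chosen independent of which of the constantly-many steps we are at, and then check that a product of $O(1)$ factors of the form $1+o(1)$ is still $1+o(1)$ — straightforward, but it is where the constancy of $k$ is essential. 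A secondary point is the bookkeeping at the moment the tree is reset in Step (3): the new search tree starts as a single vertex with no children, so the first attachment after a reset deterministically creates the first child-subtree, exactly as $2$ becomes the first nontrivial element in the permutation construction; this matches the Chinese-restaurant recursion and introduces no discrepancy.
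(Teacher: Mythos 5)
There is a genuine gap, and it comes from a misreading of Step (3) of the algorithm: the search tree is \emph{not} reset to a single vertex when the path is extended. You write that ``the new search tree starts as a single vertex with no children,'' but Step (3) sets $T$ to be the entire subtree rooted at the chosen child $x$, which has $S_\ell$ vertices and a nontrivial inherited shape. Consequently the growth phase between path lengths $\ell-1$ and $\ell$ is not a fresh Chinese Restaurant Process run from a singleton up to $k$ edges, and your central claim --- that in the ideal model the law of the largest child-subtree at the end of the phase is exactly that of $L_k$, independently of the past --- does not follow from per-attachment uniformity alone. The law of $S_\ell$ depends on the \emph{shape}, not just the size, of the carried-over subtree: for instance with $k=3$, if the inherited $3$-vertex subtree is a path, one further uniform attachment gives largest-subtree size $2$ or $3$ with probabilities $\frac13,\frac23$, while if it is a star it gives size $1$ or $2$ with probabilities $\frac13,\frac23$; neither matches the $L_3$ distribution $\left(\frac16,\frac12,\frac13\right)$. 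So the substantive issue the lemma must address is whether conditioning on $\cG_{\ell-1}$ (the size history $S_0,\dots,S_{\ell-1}$) biases the shape of the inherited subtree, and your proposal never confronts this; the per-step $(1+o(1))$ estimates you give (correctly, in the spirit of Lemmas \ref{lemma:asymptotic-uniformity} and \ref{lemma:vertex-waiting-time}) are necessary but not sufficient.

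The paper's proof supplies exactly the missing ingredient. It records the state at each path length as a recursive tree $R_\ell$ on $k$ edges and considers the whole history $(R_1,\dots,R_\ell)$, which must be consistent in the sense that $R_{i+1}$ extends the largest root-subtree of $R_i$. Partitioning valid histories according to the value of $R_\ell$, it exhibits a bijection between any two parts obtained by substituting $R_\ell'$ for $R_\ell$ and propagating the substitution backwards through the last $k$ trees only; this bijection preserves $S_0,\dots,S_{\ell-1}$ (sizes are unchanged even though shapes change) and alters at most $k^2$ attachment steps, each of whose probabilities are equal up to $1+o(1)$ by the uniformity estimates. Hence $(R_\ell \mid \cG_{\ell-1})$ is asymptotically uniform over recursive trees with $k$ edges, and only then does $S_\ell$, the largest root-subtree size of a (near-)uniform recursive tree, acquire the $L_k$ law up to a $1+o(1)$ factor. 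Your write-up contains the local uniformity estimates but omits this conditional-uniformity-of-shapes argument, which is the heart of the lemma; as written, the conclusion does not follow, and the closing claim that the reset ``introduces no discrepancy'' rests on the incorrect single-vertex reset.
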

\begin{proof}
  If $\ell \ge \tau$, then we have perfect equality because $S_\ell$ is
  then distributed exactly as $L_k$.  Otherwise, all vertices in the search
  tree have $n - \ell - o(n)$ available edges with waiting time uniform on
  an interval of length $1-o(1)$. It follows that up to a factor of
  $1+o(1)$, each vertex in the search tree has approximately the same
  probability of acquiring a child as any other vertex.
  
For any $\ell$, the search tree at the moment when the path reaches length $\ell$ can be described as a recursive tree on $k$ edges: the vertices of the tree are labeled by the order in which they enter the search tree. By the history up to length $\ell$, we mean the sequence of $(R_1, \dots, R_\ell)$ of recursive trees obtained at lengths $1, 2, \dots, \ell$. 

Not every sequence of recursive trees is a valid history: the trees must be consistent, since $R_{i+1}$ must be a suitably relabeled extension of the largest subtree at the root of $R_i$. Nevertheless, if we partition all valid histories by the value of $R_\ell$, there is a natural bijective correspondence between any two parts: to replace $R_\ell$ by $R_\ell'$, we must simply make the same substitution in subtrees of $R_{\ell-1}$, $R_{\ell-2}$, and so on, going back to $R_{\ell-k+1}$ at worst.

Two histories corresponding in this way have the same value of $S_0, S_1, \dots, S_{\ell-1}$: while the shape of the subtrees measured by these random variables may change, the size does not. Moreover, since the two histories agree on all but the last $k$ trees, they only disagree in at most $k^2$ steps of the algorithm, so the probability of obtaining them differs by a factor of $(1+o(1))^{k^2} = 1 + o(1)$. It follows that $(R_\ell \mid \cG_{\ell-1})$ is asymptotically uniformly distributed.

In a uniformly chosen recursive tree, the size of the largest subtree at the root has the same distribution as $L_k$. Therefore $S_\ell$, the size of the largest subtree at the root of $R_\ell$, satisfies $\Pr[S_\ell = s \mid \cG_{\ell-1}] = (1+o(1))
  \Pr[L_k = s]$, as desired.
\end{proof}

Since $k$ is a constant, each $\Pr[L_k = s]$ is a constant, as $s$ ranges
from 1 to $k$.  So, by Lemma~\ref{lemma:chinese-distribution}, the
conditional distribution of $Y_\ell$ given $\cG_{\ell-1}$ is also
supported on $k$ values in the range $\Theta(\frac{1}{n})$, with all
probabilities bounded away from 0 and 1 by at least some constant depending
on $k$.  Crucially, regardless of the particular $\cG_{\ell-1}$, the
distribution of $Y_\ell$ is always asymptotically $\frac{1}{n-\ell} A_k$,
where $A_k = \frac{1}{L_k} + \cdots + \frac{1}{k}$ was the random variable
defined with respect to the longest cycle in the Chinese Restaurant Process
at the end of Section \ref{sec:intuitive}.

To finish the analysis, define the martingale
\begin{displaymath}
  W_\ell = (Y_0 - \E[Y_0])
  + (Y_1 - \E[Y_1 \mid \cG_0])
  + \cdots
  + (Y_\ell - \E[Y_\ell \mid \cG_{\ell-1}]).
\end{displaymath}
It is Lipschitz with successive variations bounded by $O(\frac{1}{n})$
because $k$ is a constant, and so the Azuma--Hoeffding inequality applied to
$(W_\ell)$ implies that a.a.s.,
\begin{displaymath}
  W_T \leq \frac{\log n}{\sqrt{n}},
\end{displaymath}
or equivalently, that
\begin{displaymath}
  \sum_{\ell=0}^T Y_\ell
  \leq
  \frac{\log n}{\sqrt{n}}
  +
  \E[Y_0] + \E[Y_1 \mid \cG_0] + \cdots + \E[Y_T \mid \cG_{T-1}],
\end{displaymath}
where the right hand side is a random variable because of the conditional
expectations.  Now we use the fact that each random variable $\E[Y_\ell
\mid \cG_{\ell-1}]$ is $(1+o(1)) \frac{1}{n-\ell} \E[A_k] = (1+o(1))
\frac{\alpha_k}{n-\ell}$, by definition of $\alpha_k$. Due to our choice of $T$,
we have
\[
	\sum_{\ell=0}^T \frac{\alpha_k}{n-\ell} = (1+o(1)) \alpha_k \log \frac{n}{n-T} = 1 - 3\epsilon + o(1).
\]
Therefore we also obtain that a.a.s., $\sum_{\ell = 0}^T Y_\ell \leq 1 - 3\epsilon + o(1) < 1 - 2\epsilon$, as desired,
and so the $k$-greedy algorithm a.a.s.\ achieves a path of length $T$. Letting $\epsilon \to 0$ with $n$ so that $T = (1 - e^{-1/\alpha_k} + o(1))n$, this completes the proof of Theorem~\ref{k-greedy}.
  \hfill $\Box$

\section{Computing the second moment of $H_n$}

The core of our proof of Theorem \ref{main-theorem} is the second moment
calculation for $H_n$, the random variable which tracks the number of
increasing Hamiltonian paths in a uniformly random edge ordering.  This
second moment, $H_n^2$, counts the number of ordered pairs of increasing
Hamiltonian paths, which can be expressed as a sum of indicator variables:
$H_n^2 = \sum_A \sum_B I_{A,B}$, where $A$ and $B$ range over all
Hamiltonian paths, and $I_{A,B}=1$ if both paths are increasing when $f$ is
chosen, and $0$ otherwise.  Note that although we are working with
undirected graphs, we consider Hamiltonian paths with direction, and
therefore, when we speak of a Hamiltonian path in this section, we are
referring to a permutation of the $n$ vertices.  In particular, each
undirected $n$-vertex path will correspond to two such permutations, and
will appear twice in our indexing, once in each direction.

We begin by grouping the indicator variables into equivalence classes which
we call \emph{intersection profiles}. Two pairs of paths $(A,B)$ and
$(A',B')$ are in the same intersection profile if there is a permutation of
the edges of $K_n$ (without necessarily preserving all pairwise incidence
relations between edges) that takes the paths $A$ and $B$ to $A'$ and $B'$.
We can represent such a profile as a graph by separating the vertices of
$A$ and $B$ that are not endpoints of a common edge, as in
Figure~\ref{fig:profile}.

There are many ways to order the edges of an intersection profile to make
both paths increasing: only the relative orders of the edges common to both
$A$ and $B$ are fixed.  In our counting, we further split each intersection
profile up into labeled profiles, in which one such ordering is chosen
(again, see Figure~\ref{fig:profile} for an example).
\begin{figure}
	\centering
	\includegraphics[width=0.7\textwidth]{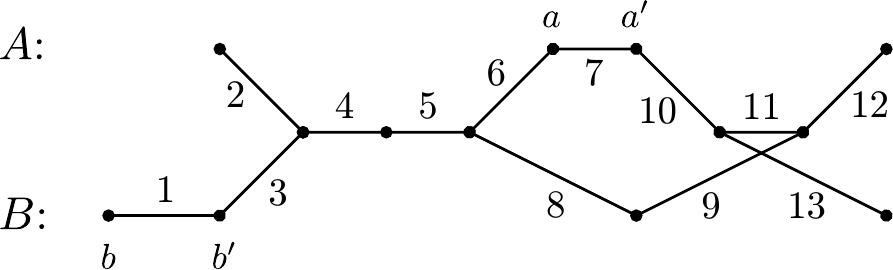}
	\caption{A labeled profile for two paths of length $8$, with $c=3$,
        $k=2$, $\ell=1$.}
	\label{fig:profile}
\end{figure}
We keep track of three parameters of an unlabeled profile $P$:
\begin{enumerate}
\item $c(P)$ is the number of common edges shared by the two paths $A$ and $B$.
\item $k(P)$ is the number of common segments shared by the two paths: this satisfies $k(P) \le c(P)$ because each common segment contains at least one edge, and $k(P) \le n-c(P)$ because each path contains at least one edge between two common segments.
\item $\ell(P)$ is the number of common segments which consist of exactly
  one edge: since $c(P) \geq \ell(P) + 2[k(P) - \ell(P)]$, this satisfies
  $\ell(P) \ge 2k(P) - c(P)$.
\end{enumerate}

Let $\cP(c,k,\ell)$ be the set of profiles $P$ such that $c(P)=c, k(P)=k,
\ell(P)=\ell$, and $\cL(c,k,\ell)$ the corresponding set of labeled
profiles.  The total number of edges in $P$ is $2(n-1) - c$; therefore, if
paths $A$ and $B$ fit some intersection profile $P$, and the edge ordering
$f$ is chosen randomly, the ordering of $A$ and $B$ will match any given
labeled version of $P$ with probability $\frac1{(2n-c-2)!}$. So we can write
\begin{equation}
\label{eq:second-moment}
\E[H_n^2] = \sum_{c,k,\ell} \sum_{P \in \cL(c,k,\ell)} \frac{|P|}{(2n-c-2)!}
\end{equation}
where $|P|$ is the number of pairs of paths $(A,B)$ that fit the unlabeled version of $P$.
We split the sum \eqref{eq:second-moment} into several parts:
\begin{enumerate}
\item $S_1$, the sum over $c \le \log n$ (most other notions of ``small'' $c$ would also be sufficient here);
\item $S_2$, the sum over $\log n < c \le \frac{9}{10}n$; and
\item $S_3$, the sum over $c > \frac{9}{10}n$.
\end{enumerate}

It will be $S_1$ that contributes the most to $\E[H_n^2]$, and so we state
two lemmas that provide asymptotically exact estimates (with multiplicative
error tending to zero as $n \rightarrow \infty$) for $S_1$ while only
serving as rough upper bounds in $S_2$ and $S_3$.  Here, ${n \choose a, b,
c}$ denotes the multinomial coefficient $\frac{n!}{a! b! c!}$.

\begin{lemma}
\label{lemma:profiles}
For all $c, k, \ell$, 
\[
	|\cL(c,k,\ell)| \le 2^\ell {k \choose \ell} {c-k-1 \choose k-\ell-1} {2(n-c-1)+k \choose n-c-1, n-c-1, k}
\]
and $|\cL(c,k,\ell)|$ is asymptotically equal to the right hand side when
$c \le \log n$. Furthermore,
\[
	\sum_{\ell} |\cL(c,k,\ell)| \le 2^k {c-1 \choose k-1} {2(n-c-1)+k \choose n-c-1, n-c-1, k}
\]
\end{lemma}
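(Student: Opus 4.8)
The plan is to encode each labeled profile by three pieces of combinatorial data, count the resulting codewords to obtain the two upper bounds, and then check that for $c\le\log n$ almost every codeword corresponds to a genuine profile. The structural fact I would establish first is that, because both $A$ and $B$ are required to be increasing, the $c$ common edges are totally ordered by their labels in a way that must be respected by both traversals: if one common segment lies entirely before another along $A$, then all of its labels are smaller, so it lies before the other along $B$ and also in the global label order. Hence the $k$ common segments occur in one and the same order in $A$, in $B$, and in the label order; each common segment is a contiguous block of the label order, since an edge with a label strictly between two edges of a segment would have to fall strictly between two consecutive edges of $A$ (and of $B$); and by maximality no two of these blocks are adjacent in the label order.

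Given this, a labeled profile should be recoverable from: (a) the composition $(m_1,\dots,m_k)$ of $c$ recording the common-segment lengths in order; (b) a shuffle of the $n-c-1$ private edges of $A$, the $n-c-1$ private edges of $B$, and the $k$ segment-blocks into a single linear order --- this is the label order, and restricting it to the edges of $A$ (resp.\ $B$) recovers $A$ (resp.\ $B$) as an edge sequence, since an increasing path lists its edges in label order; and (c) for each singleton common segment, a choice of one of the (at most two) relative orientations of $A$ and $B$ through that edge, the common traversal direction of a segment of length $\ge 2$ being already forced by the previous paragraph. Conversely each such codeword yields at most one labeled profile, and exactly one as long as the shuffle has no two blocks adjacent (and the codeword is not one of the few that fail to be realizable in $K_n$).

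Counting the codewords then gives the first inequality. Compositions of $c$ into $k$ parts with exactly $\ell$ parts equal to $1$ number $\binom{k}{\ell}\binom{c-k-1}{k-\ell-1}$: choose the $\ell$ singleton slots, then write $c-\ell$ as a composition of $k-\ell$ parts each at least $2$. The orientations in (c) contribute a factor of at most $2^\ell$. Dropping the no-adjacency restriction, the shuffles in (b) number at most $\binom{2(n-c-1)+k}{n-c-1,\,n-c-1,\,k}$, the number of interleavings of the two ordered private streams of length $n-c-1$ with the $k$ distinguishable ordered blocks. Multiplying yields the stated bound on $|\cL(c,k,\ell)|$. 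For $c\le\log n$ it is asymptotically sharp: the only slack comes from the dropped adjacency restriction (together with the rare non-realizable codewords), and a union bound over the $O(k^2)$ ordered pairs of blocks, each of which is adjacent in a fraction $O(k/n)$ of shuffles, shows that only an $O(k^3/n)=o(1)$ fraction of codewords is lost, since $k\le c\le\log n$.

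For the second inequality I would sum the first over $\ell$. The multinomial factor does not depend on $\ell$, and by Vandermonde's identity $\sum_\ell 2^\ell\binom{k}{\ell}\binom{c-k-1}{k-\ell-1}\le 2^k\sum_\ell\binom{k}{\ell}\binom{c-k-1}{(k-1)-\ell}=2^k\binom{c-1}{k-1}$; equivalently, forget which segments are singletons, so (a) becomes an arbitrary composition of $c$ into $k$ parts, of which there are $\binom{c-1}{k-1}$, and bound the orientation factor by $2^k$. I expect the real work to lie in the encoding step: making (a)--(c) into a genuine injection up to the negligible exceptions, in particular verifying carefully that singleton segments are the only place where the two increasing paths can disagree in orientation and that different codewords never produce the same labeled profile. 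Once the encoding is pinned down, the enumeration and the $c\le\log n$ asymptotics are routine.
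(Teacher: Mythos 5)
Your encoding and counting for the two upper bounds is essentially the paper's argument: segment-length compositions with $\ell$ singletons give $\binom{k}{\ell}\binom{c-k-1}{k-\ell-1}$, orientations give $2^\ell$ (forced for segments of length $\ge 2$), and the interleaving of the two private edge streams with the $k$ segment blocks gives the multinomial factor; summing over $\ell$ via Vandermonde and $2^\ell \le 2^k$ gives the second inequality exactly as in the paper. That part is fine.

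The gap is in the asymptotic-equality claim for $c \le \log n$. You assert that a codeword yields a genuine labeled profile ``as long as the shuffle has no two blocks adjacent (and the codeword is not one of the few that fail to be realizable in $K_n$),'' and your union bound then controls only the adjacency events. But adjacency of two $C$-blocks is not the right bad event --- indeed your own structural observation shows it can essentially never occur --- and the parenthetical catch-all is exactly where the content of the lemma lies, yet it is left uncharacterized and uncounted. The codewords that must be excluded are: (a) two consecutive $C$'s separated only by $A$'s or only by $B$'s (any positive number, not just zero): then the two common segments are consecutive along one path while separated along the other, forcing that path to revisit a vertex, so no pair of simple paths realizes the string; and (b) two consecutive $C$'s separated by exactly one $A$ and exactly one $B$: then (when the junction orientations agree) the lone private edge of $A$ and the lone private edge of $B$ must join the same pair of vertices, hence coincide in $K_n$, so the two segments merge and the pair fits a different profile. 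Neither of these is an ``adjacent blocks'' event, so your $O(k^3/n)$ union bound does not cover them, and without identifying them you cannot argue they are rare. The paper's proof consists precisely of this identification plus explicit counts (bijecting strings containing a pattern $C B\cdots B C$ with shorter strings, and similarly for the $CABC$ pattern), showing each bad class is an $O(k^2/m)$ fraction of all strings with $m=n-c-1$, which is $o(1)$ when $k \le c \le \log n$. Your argument could be completed along these lines, but as written the lower-bound half of the first claim is not established.
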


\begin{lemma}
\label{lemma:vertices}
For all $P \in \cP(c, k, \ell)$, $|P| \le n! (n-c-k)!$; furthermore, if $c \le \log n$, then $|P| \sim e^{-2} n! (n-c-k)!.$
\end{lemma}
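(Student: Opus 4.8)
The plan is to compute $|P|$ via a two-stage construction of the pairs $(A,B)$ fitting $P$, isolating a ``raw'' count of $n!\,(n-c-k)!$ together with a multiplicative correction that converges to $e^{-2}$.

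First I would fix a directed Hamiltonian path $A_0$ of $K_n$ and set $N_P=\#\{B:(A_0,B)\text{ fits }P\}$; by vertex-transitivity of $K_n$ this is independent of $A_0$, so $|P|=n!\,N_P$. Reading $P$ with $A_0$ playing the $A$-role rigidly determines everything relevant (a directed path has no symmetries): which $c+k$ vertices of $K_n$ lie on common segments, the positions, lengths, and orientations of those segments inside $A_0$, and the complete layout of $B$ — the order and orientation in which the $k$ common segments occur along $B$ and the locations of its $n-c-k$ ``private'' positions (vertices on no common edge). Hence $B$ is determined by the bijection sending the $n-c-k$ private vertices of $K_n$ to the $n-c-k$ private positions of $B$, of which there are $(n-c-k)!$. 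This gives $N_P\le(n-c-k)!$ and so $|P|\le n!\,(n-c-k)!$, the first assertion. Equality fails only when a bijection produces a pair whose true profile properly extends $P$: that is, when some ``new'' edge of $B$ (one of the $n-1-c$ edges of $B$ not among the $c$ prescribed common ones) is accidentally also an edge of $A_0$.

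So for the second assertion it remains to show that, when $c\le\log n$, the fraction of the $(n-c-k)!$ bijections creating no accidental common edge tends to $e^{-2}$, uniformly over $P\in\cP(c,k,\ell)$. The heuristic is that this is the probability that two near-independent random Hamiltonian paths share no edge outside a prescribed set of size $o(n)$: a fixed edge of $K_n$ lies in a uniformly random Hamiltonian path with probability $\tfrac2n$, so (using $c=o(n)$) the expected number of accidental common edges is $(1+o(1))\cdot\tfrac2n\cdot(n-1-c)\to2$, and this count should be asymptotically $\mathrm{Poisson}(2)$. To make this precise I would expand $N_P$ by inclusion–exclusion over the accidental common edges,
\[
  N_P=\sum_{j\ge 0}(-1)^j\!\!\sum_{\substack{F\subseteq E(A_0)\setminus\{\text{prescribed}\}\\ |F|=j}}\!\!\#\{B: E(B)\supseteq(\text{prescribed})\cup F,\ B\text{ laid out as in }P\},
\]
and show the $j$-th summand is $(1+o(1))\tfrac{2^j}{j!}(n-c-k)!$: the main term comes from $j$-sets $F$ of pairwise vertex-disjoint edges avoiding the common segments — forcing such an $F$ into $B$ imposes $j$ asymptotically independent constraints, each shrinking the admissible bijections by a factor $(1+o(1))\tfrac2n$, and there are $\binom{n-1-c}{j}=(1+o(1))\tfrac{n^j}{j!}$ of them — while $F$ with a shared vertex or meeting a common segment are down by $\Theta(1/n)$ per coincidence. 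A Bonferroni truncation handles the tail in $j$ (each summand is at most $\tfrac{C^j}{j!}(n-c-k)!$ for an absolute $C$), and $\sum_j(-1)^j\tfrac{2^j}{j!}=e^{-2}$ yields $N_P=(1+o(1))e^{-2}(n-c-k)!$, hence $|P|\sim e^{-2}\,n!\,(n-c-k)!$.

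The crux — and the step I expect to be most delicate — is the \emph{uniformity} of this Poisson estimate over all $P\in\cP(c,k,\ell)$ with $c\le\log n$: one must check that a random completion $B$ of the $P$-prescribed layout behaves, for the purpose of counting accidental coincidences with $A_0$, like a uniformly random Hamiltonian path up to a $1+o(1)$ factor regardless of how intricate the layout dictated by $P$ is, and that all the collision terms in the inclusion–exclusion are $o(1)$ uniformly in $P$. The two-stage construction and the bound $|P|\le n!\,(n-c-k)!$ are routine; all the difficulty is concentrated in establishing this near-independence uniformly.
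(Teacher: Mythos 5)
Your proposal is correct, and the first half (fix $A_0$, count completions of $B$ by bijections of the $n-c-k$ private vertices into the private positions, giving $|P|\le n!\,(n-c-k)!$) is exactly the paper's argument. For the asymptotic part you take a genuinely different route: the paper maps each completion of $B$ to a permutation of $\{1,\dots,n-c-k\}$ (labelling the private vertices by their order along $A$), invokes Wolfowitz's theorem that the number of adjacent consecutive elements in a random permutation is asymptotically Poisson with mean $2$ to get the $e^{-2}$, and then separately bounds the discrepancies between ``adjacent consecutive elements'' and ``extraneous common edges'' arising at the junctions with common segments (three union-bound cases, each $O(k/(n-c-k)) = o(1)$ for $c\le\log n$). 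You instead sieve directly over the accidental common edges of $A_0$ and $B$ by inclusion--exclusion/Bonferroni, in effect reproving the Poisson$(2)$ estimate inline in the constrained setting. What your approach buys is self-containedness and a cleaner formulation: working with edges rather than consecutive elements of a permutation means the junction effects never appear as false positives/negatives but simply as $O(\log n)$ edges of $A_0$ incident to common segments, each forced with probability $O(1/n)$, so they vanish inside the same moment computation; the cost is that you must verify the factorial-moment/Bonferroni estimates (the $(1+o(1))(2/n)^j\binom{n-1-c}{j}$ main term and the collision bounds) uniformly over profiles, which, as you note, is routine when $c,k\le\log n$. The paper's route is shorter by citation but needs the explicit correction argument at junctions. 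Both yield $|P|\sim e^{-2}n!\,(n-c-k)!$ with the required uniformity.
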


By using these lemmas, we can write out algebraic expressions for $S_1$, $S_2$, and $S_3$:
\begin{align}
S_1 &\sim e^{-2} \sum_{c=0}^{\log n} \sum_{k,\ell} 2^\ell {k \choose \ell} {c-k-1 \choose k-\ell-1} {2(n-c-1)+k \choose n-c-1, n-c-1, k} \frac{n! (n-c-k)!}{(2n-c-2)!} \label{eq:s1} \\
S_2 &\le \sum_{c=\log n}^{9n/10} \sum_k 2^k {c-1 \choose k-1} {2(n-c-1)+k \choose n-c-1, n-c-1, k} \frac{n! (n-c-k)!}{(2n-c-2)!} \label{eq:s2} \\
S_3 &\le \sum_{c=9n/10}^{n-1} \sum_k 2^k {c-1 \choose k-1} {2(n-c-1)+k \choose n-c-1, n-c-1, k} \frac{n! (n-c-k)!}{(2n-c-2)!}. \label{eq:s3}
\end{align}

Therefore, $\E[H_n^2] \sim e n^2$ will follow from:

\begin{lemma}
  The right hand side of the expression \eqref{eq:s1} is asymptotic to $e
  n^2$, and both of the right hand sides of \eqref{eq:s2} and \eqref{eq:s3}
  simplify to $o(n^2)$.
  \label{lem:s123-asymp}
\end{lemma}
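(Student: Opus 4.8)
The plan is to treat the three sums separately, with most of the work going into the asymptotic analysis of $S_1$ in \eqref{eq:s1}. For $S_1$, I would first perform the sum over $\ell$ for each fixed $c$ and $k$. Observe that $\sum_\ell 2^\ell {k \choose \ell}{c-k-1 \choose k-\ell-1}$ is a Vandermonde-type convolution: substituting $j = k-\ell-1$ and using the binomial theorem on a generating function, this should collapse to a clean closed form (something of order $2^k {c-1 \choose k-1}$ up to lower-order corrections, matching the cruder bound in Lemma~\ref{lemma:profiles}). Since $c \le \log n$ is tiny compared to $n$, I would then Taylor-expand the ratio of factorials $\frac{n!\,(n-c-k)!}{(2n-c-2)!} \cdot {2(n-c-1)+k \choose n-c-1,n-c-1,k}$ for fixed $c,k$. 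The multinomial coefficient is roughly $\binom{2(n-c-1)}{n-c-1} n^k / k!$, and combining with $n!(n-c-k)!/(2n-c-2)!$ and the Stirling estimate for the central binomial coefficient, all the $n$-dependence should cancel except for an overall factor; I expect each $(c,k)$ term to contribute asymptotically $e^{-2} \cdot (\text{something summable in } c, k) \cdot n^2 \cdot 4 \cdot$ [a factor accounting for $\sqrt{\pi n}$ cancellations]. The remaining double sum over $c \ge 0$ and $k$ (now with $n \to \infty$, so $c$ ranges over all nonnegative integers) should be an absolutely convergent series that evaluates to $e^3$, so that $e^{-2} \cdot e^3 \cdot n^2 = e n^2$; I would verify the series identity by recognizing it as an exponential generating function evaluation, likely $\sum_c \frac{1}{c!}(\text{geometric-type factor in }k) = e \cdot e^2$.

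For $S_2$, the strategy is to bound the summand uniformly and show the whole sum is $o(n^2)$. Using the cruder bound from Lemma~\ref{lemma:profiles} (summed over $\ell$), the summand for fixed $c, k$ is at most $2^k {c-1 \choose k-1}{2(n-c-1)+k \choose n-c-1,n-c-1,k}\frac{n!(n-c-k)!}{(2n-c-2)!}$. I would first sum over $k$ from $1$ to $c$: since ${c-1 \choose k-1} \le 2^{c-1}$ and the multinomial/factorial part is maximized at small $k$, the $k$-sum is at most $\mathrm{poly}(n) \cdot 4^c \cdot g(c,n)$ for an explicit $g$. Then I need to show $\sum_{c = \log n}^{9n/10} 4^c g(c,n) = o(n^2)$. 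The key point is that $g(c,n) = \frac{n!(n-c)!}{(2n-c)!}\binom{2(n-c)}{n-c} \cdot (\text{small})$ decays geometrically in $c$: writing $c = \beta n$, a Stirling computation shows $g(\beta n, n) \approx \exp\big(n \cdot \phi(\beta)\big)$ for some function $\phi$ with $\phi(0) = 0$, and the relevant quantity $4^c g(c,n) \approx \exp\big(n(\beta\log 4 + \phi(\beta))\big)$; I would check that $\beta \log 4 + \phi(\beta) < 0$ for all $0 < \beta \le 9/10$, which makes the sum exponentially small for $c$ linear in $n$, and already $O(4^c \cdot c^{-\Theta(1)}\cdot\text{stuff})$ small enough for $c$ between $\log n$ and $\varepsilon n$. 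This convexity/negativity check for $\phi$ on $(0, 9/10]$ is where I expect the real work to be.

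For $S_3$, where $c > \frac{9}{10} n$, the path is almost entirely shared, so I would use a different reduction: here $c$ is close to $n-1$, and it is more convenient to set $d = n - 1 - c$ (the number of non-shared edges in each path, roughly) as the small parameter. The factor $\frac{n!(n-c-k)!}{(2n-c-2)!}$ becomes very small because $(2n-c-2)! = (n + d - 1)!$ grows while $(n-c-k)! = (d+1-k)!$ is tiny; the multinomial coefficient $\binom{2(n-c-1)+k}{n-c-1,n-c-1,k} = \binom{2d+k}{d,d,k}$ is only polynomially large in $n$ when $d$ is small. A direct estimate should show each term is at most $\mathrm{poly}(n)/(n-d-1)!$-type small, and summing the geometric-ish tail over $d$ from $0$ to $n/10$ gives $o(n^2)$ comfortably — in fact this range should be the easiest of the three. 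Throughout, the main obstacle is the bookkeeping in $S_1$: getting the factorial ratios to cancel to exactly $en^2$ rather than some other constant multiple requires care with the Stirling corrections (the $\sqrt{2\pi n}$ factors from the central binomial coefficient must cancel against those from $n!$, $(n-c)!$, $(2n-c-2)!$), and correctly identifying the limiting series as $e^3$.
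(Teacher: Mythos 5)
The decisive problem is your treatment of $S_2$: the bound you propose is too lossy to close. You replace $\binom{c-1}{k-1}$ by $2^{c-1}$ (and $2^k$ by up to $2^c$), take the maximum of the remaining multinomial/factorial part over $k$ (which indeed sits at bounded $k$), and then hope to verify $\beta\log 4+\phi(\beta)<0$ for $c=\beta n$, $0<\beta\le 9/10$. That inequality is false for \emph{every} $\beta$ in the range. Indeed, with $m=n-c=\mu n$ your $g(c,n)$ is, up to polynomial factors, $\binom{2m}{m}\big/\binom{n+m}{m}$, and the exponential rate of even the weaker quantity $2^{c}\binom{2m}{m}\big/\binom{n+m}{m}$ is $(1+\mu)\log\frac{2}{1+\mu}+\mu\log\mu$, which is strictly positive for all $0<\mu<1$ (it decreases to $0$ only as $\mu\to1$, i.e.\ $c=o(n)$); for instance at $c=n/2$ your bound is at least $2^{0.12n}$. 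The point is that the factorial-ratio decay, which after exact simplification is $\frac{(n)_{c+1}}{(n-c-1)_{k-1}(2n-c-2)_{c-k}}\approx n^2\,2^{-(c-k)}$ for small $k$ (and never much better than geometric with ratio close to $\tfrac12$), cannot absorb an extra $2^c$ thrown in by bounding the binomial crudely. The actual sum is small because the maximizing $k$ is $\Theta(\sqrt c)$, where $2^k\binom{c-1}{k-1}=e^{O(\sqrt c\log c)}$ is subexponential in $c$ and is beaten by a factor such as $\frac{(n-2)_{c-k}}{(2n-c-2)_{c-k}}\le(10/11)^{c-k}$ (valid since $c\le\frac{9}{10}n$); the paper finds this via the ratio $a_{c,k+1}/a_{c,k}$, and it must additionally handle a second candidate regime $k$ within $O(1)$ of $n-c$ by a separate argument, which your plan does not anticipate. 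As written, the $S_2$ step fails and is not a matter of bookkeeping.

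Two secondary issues. For $S_1$, your first move — collapsing $\sum_\ell 2^\ell\binom{k}{\ell}\binom{c-k-1}{k-\ell-1}$ to ``$2^k\binom{c-1}{k-1}$ up to lower-order corrections'' — is not available: these are $n$-free numbers (e.g.\ $c=2$, $k=1$ gives $1$ versus $2$), so that replacement is only an upper bound and would not produce the exact constant $e$. The paper instead keeps all three indices, uses the exact simplification of the multinomial/factorial block to $\frac{1}{k!}\cdot\frac{(n)_{c+1}}{(n-c-1)_{k-1}(2n-c-2)_{c-k}}\sim\frac{n^2 2^{k-c}}{k!}$ for $c\le\log n$ (no Stirling or $\sqrt{\pi n}$ cancellations are needed), and evaluates the residual series by summing over $c$ first via $\sum_j\binom{j+m-1}{m-1}z^j=(1-z)^{-m}$, then over $\ell$ (giving $3^k$), then over $k$ (giving $e^3$), so that $S_1\sim e^{-2}e^3n^2$; your guessed series $\sum_c\frac{1}{c!}(\cdots)$ has the wrong shape, though the target $e^{-2}\cdot e^3$ is right. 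For $S_3$, note $d=n-c$ runs up to $n/10$, so $\binom{2d+k}{d,d,k}\approx 3^{2d+k}$ is exponentially large, not polynomially large; the conclusion still holds because $\frac{n!(d-k)!}{(n+d-2)!}\approx(d-k)!\,n^{-(d-2)}$ together with the $\frac{1}{(k-1)!}$ from $\binom{n-d-1}{k-1}$ beats it, but your stated justification only covers bounded $d$, so this part needs the explicit estimate rather than the heuristic.
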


\subsection{\boldmath Asymptotics for $|\cL|$ and $|P|$}

\begin{proof}[Proof of Lemma~\ref{lemma:profiles}]
Let $c$, $k$, and $\ell$ be given.  For the remainder of the proof, let $m
= n-c-1$ stand for the number of edges that belong to $A$ but not $B$
(equivalently, to $B$ but not $A$), when paths $A$ and $B$ fit a profile
from $\cL(c,k,\ell)$.  Consider the following two-stage method for
selecting a labeled profile from $\cL(c,k,\ell)$.  All such labeled
profiles will be reachable in this way.
\begin{enumerate}
\item Choose the sequence of lengths for the common segments, and their
  relative orientations within paths $A$ and $B$. The segments of length
  $1$ can appear in ${k \choose \ell}$ positions, and the remaining
  $c-\ell$ common edges can be partitioned into $k-\ell$ ordered parts of
  size at least $2$ in ${c-k-1 \choose k-\ell-1}$ ways.  Common segments of
  length at least 2 will already have a fixed orientation, because their
  sequential edge labels will need to be increasing with respect to both
  $A$ and $B$.  On the other hand, a common segment of length 1 could be
  traversed in either the same direction by both paths, or in opposite
  directions (as in the case of the second common segment in
  Figure~\ref{fig:profile}).  Therefore, by considering relative
  orientations, we gain another factor of exactly $2^\ell$.

\item Now that the sequence of lengths and directions has been fixed for
  all common segments, it remains to choose an order in which the $k$
  common segments, the $m$ edges of $A$, and the $m$ edges of $B$ appear.
  For this, we construct labeled profiles from strings of $m$ \emph{A}'s,
  $m$ \emph{B}'s, and $k$ \emph{C}'s.  For example, if we have already
  fixed the first common segment to have length 2, and the other common
  segment to have length 1, traversed in both directions, then the string
  \emph{BABCAABBACAB}\/ corresponds precisely to Figure~\ref{fig:profile}.
  There are at most ${2m+k \choose m, m, k}$ such strings of \emph{A}'s,
  \emph{B}'s, and \emph{C}'s.
\end{enumerate}

The above two-step procedure immediately implies the claimed upper bound on
$|\cL(c, k, \ell)|$ in Lemma~\ref{lemma:profiles}.  Our next objective is
to show that this bound is asymptotically correct for $c \leq \log n$.  The
second step overestimates $|\cL(c,k,\ell)|$ because of two possible illegal
interactions between adjacent common segments: (a) we cannot have two
consecutive \emph{C}'s, separated by all \emph{A}'s or all \emph{B}'s, and
(b) no consecutive \emph{C}'s can be separated by exactly one
\emph{A}\/ and exactly one \emph{B}.  We will show that the number of such
strings is $o(1)$-fraction of the total number of strings which appear in
the second step.

First, we control the number of strings which have two \emph{C}'s which are
separated only by \emph{B}'s.  For this, fix one of the $k-1$ gaps between
common segments, and suppose that there are exactly $m-i$ \emph{B}'s in the
gap, with $0 \le i \le m$.  Then, those strings are in bijective
correspondence with the strings with exactly $m$ \emph{A}'s, exactly $i$
\emph{B}'s, and exactly $k-1$ \emph{C}'s: the bijection is realized by the
deletion of a segment \emph{BB\dots BC}\/ (with $m-i$ \emph{B}'s) after the
\emph{C}\/ which corresponds to the beginning of the gap.  Thus, the total
number of strings which have two \emph{C}'s separated only by \emph{B}'s is
at most
\begin{align}
  \nonumber
  (k-1) \sum_{i=0}^{m} {m + i + k-1 \choose m, i, k-1} 
  &= (k-1) {m+k-1 \choose k-1} \sum_{i=0}^m {m+k-1+i \choose i} \\
  \nonumber
  &= (k-1) {m+k-1 \choose k-1} {2m+k \choose m} \\
  \label{eq:CBBC}
  &= (k-1) \frac{k}{m+k} {2m+k \choose m, m, k}.
\end{align}

The same bound applies for the total number of strings with two \emph{C}'s
separated only by \emph{A}'s.  Similarly, if two \emph{C}'s are separated
by exactly one \emph{A}\/ and one \emph{B}, there are two possible
orderings (\emph{AB}, \emph{BA}) between the \emph{C}'s, and so the total
number of such strings is at most
\begin{align}
  \nonumber
  (k-1) 2 {2m+k - 3 \choose m-1, m-1, k-1}
  &=
  (k-1) 2 \frac{(2m+k-3)!}{(m-1)! (m-1)! (k-1)!} \\
  \label{eq:CABC}
  &=
  \frac{2 (k-1) m^2 k}{(2m+k) (2m+k-1) (2m+k-2)} \cdot \frac{(2m+k)!}{m! m! k!} 
\end{align}

When $k \le c \le \log n$, both \eqref{eq:CBBC} and \eqref{eq:CABC} are of
order $\frac{k^2}{m} {2m+k \choose m, m, k}$, and $\frac{k^2}{m} = o(1)$.
Therefore the true number of choices to be made in the second step is
indeed $(1-o(1)){2m+k \choose m, m, k}$ for small $c$ and $k$, as claimed.
Finally, to obtain the rougher approximation for the second part of the
lemma, we forget about the value of $\ell$, and get an upper bound by
assuming that all $k$ segments can be reversed, even if they don't have
length $1$.  Effectively, we use $2^\ell \leq 2^k$ and $\sum_\ell
\binom{k}{\ell} \binom{c-k-1}{k-\ell-1} = \binom{c-1}{k-1}$.
\end{proof}

\begin{proof}[Proof of Lemma~\ref{lemma:vertices}]
The first part of the lemma is immediate: there are $n!$ ways to choose the
$n$ vertices of $A$, and $(n-c-k)!$ ways to choose the remaining $n-c-k$
vertices of $B$. However, this mistakenly counts some pairs of paths that
don't fit the profile $P$.  For example, if the profile in Figure
\ref{fig:profile} embeds into $K_n$ by sending the vertices labeled $a$ and
$b$ to the same vertex in $K_n$, and sending the vertices labeled $a'$ and
$b'$ to the same vertex in $K_n$, then the embedded paths no longer
correspond to the intersection profile in the figure, because additional
common segments have been created.  So, to prove the second part of the
lemma, we must estimate the probability that in a random permutation of the
$n-c-k$ vertices of $B$ which are not on common segments with $A$, no new
common segments are created between the embedded paths.

We first consider the case $c=k=0$, which clearly corresponds to the
probability that in a random permutation of $\{1,2,\dots,n\}$, no two
consecutive elements are adjacent. Wolfowitz~\cite{wolfowitz44} has shown
that asymptotically, the number of adjacent consecutive elements has the
Poisson distribution with mean $2$, and therefore we obtain the desired
probability of $e^{-2}$.

For the general case, suppose that the $n$ vertices of $A$ have been fully
embedded into the $K_n$.  Exactly $n-c-k$ of them correspond to vertices of
$A$ which are not shared by $B$ in the profile diagram.  Following the
natural order for $A$ in the profile, label those $n-c-k$ embedded vertices
(in $K_n$) by $1, 2, \ldots, n-c-k$.  Then, each embedding of the remaining
$n-c-k$ vertices of $B$ (which completes the embedding of the two paths $A$
and $B$) corresponds precisely to a distinct permutation of $\{1, 2,
\ldots, n-c-k\}$, because both $A$ and $B$ are Hamiltonian paths, and thus
each use all of the vertices.  Here, the permutation is the order in which
the vertices $\{1, 2, \ldots, n-c-k\}$ are visited when the embedded $B$ is
traversed in its natural order.  Permutations with adjacent consecutive
elements still approximately correspond to embeddings which create
extraneous common segments, and it remains to quantify the error in the
approximation, which arises at junctions with common segments.

When $A$ is traced in its natural order, there are either $k-1$ or $k$
vertices of $A$ which come immediately before the start of a common
segment.  (There are $k-1$ if the first vertex of $A$ is already part of a
common edge between the two paths, and $k-1$ otherwise.)  Let $i_j \in \{1,
2, \ldots, n\}$ be the label in $K_n$ of the embedded vertex corresponding
to the vertex of $A$ which comes immediately before the start of the $j$-th
common segment.  If there are only $k-1$ such vertices, then leave $i_1$
undefined, and ignore all references to it in the remainder of this
argument.

In terms of the $i_j$'s, permutations $\sigma$ with adjacent consecutive
elements correspond to embeddings with extraneous common segments, except
when for some $j$, we have that (a) $i_j$ and $i_{j}+1$ are adjacent in
$\sigma$, or (b) the vertex of $B$ which immediately precedes the $j$-th
common segment maps to $i_j$, or (c) the vertex of $B$ which comes right
after the $j$-th common segment maps to $i_j + 1$.  To see this, observe
that (a) identifies a ``false positive,'' in which the elements are
adjacent in $\sigma$, but are actually separated by a common segment in
$B$'s traversal.  On the other hand, (b) and (c) represent the ``false
negatives,'' in which the $j$-th common segment is unduly extended.
Fortunately, a union bound over all $j$ shows that the probability of (a)
happening is at most $\frac{2k}{n-c-k-1}$, the probability of (b) happening
is at most $\frac{k}{n-c-k}$, and the probability of (c) has the same
bound.  All of these quantities are $o(1)$ for $k \leq c \leq \log n$, and
therefore the probability that no new common segments are created differs
by $o(1)$ from the probability that no consecutive elements occur, and is
also asymptotically $e^{-2}$.
\end{proof}

\subsection{\boldmath Estimating $S_1$}

To simplify the expressions involved, we use the notation $(n)_k$ for the
falling power $\frac{n!}{(n-k)!} = n(n-1)\cdots(n-k+1)$. This satisfies
$(n)_k \sim n^k$ for $k = o(\sqrt{n})$. In particular, for $c \le \log n$,
we have $k \le \log n$ as well; therefore for falling powers linear in $c$
and $k$, we may freely use this asymptotic relation.  Starting from
\eqref{eq:s1}, we have:
\begin{align*}
S_1 &\sim e^{-2} \sum_{c=0}^{\log n} \sum_{k,\ell} 2^\ell {k \choose \ell} {c-k-1 \choose k-\ell-1} {2(n-c-1)+k \choose n-c-1, n-c-1, k} \frac{n! (n-c-k)!}{(2n-c-2)!} \\
&= e^{-2} \sum_{c=0}^{\log n} \sum_{k,\ell} 2^\ell {k \choose \ell} {c-k-1 \choose k-\ell-1} \frac{(2n-2c+k-2)! n! (n-c-k)!}{k! (n-c-1)!^2 (2n-c-2)!} \\
&= e^{-2} \sum_{c=0}^{\log n} \sum_{k,\ell} 2^\ell {k \choose \ell} {c-k-1 \choose k-\ell-1} \frac1{k!} \cdot 
\frac{(n)_{c+1}}{(n-c-1)_{k-1} (2n-c-2)_{c-k}} \\
&\sim e^{-2} n^2  \sum_{c=0}^{\log n} \sum_{k,\ell} {k \choose \ell} {c-k-1 \choose k-\ell-1} \frac{2^{\ell-c+k}}{k!}.
\end{align*}
Splitting off the $e^{-2} n^2$ factor, it turns out that the remaining
double sum converges to a constant $C$ (which depends only on $c$, $k$, and
$\ell$, not $n$) as $n \rightarrow \infty$, and we now compute this limit.
Recall that the constraints on $k$ and $\ell$ in the inner sum are that $0
\le \ell \le k \le c$ and, furthermore, that $c \ge \ell + 2(k-\ell) =
2k-\ell$ (which is a stronger bound than $c \geq k$). Therefore
\begin{align*}
  C 	&= \sum_{c=0}^\infty \sum_{k,\ell} {k \choose \ell} {c-k-1 \choose k-\ell-1} \frac{2^{\ell-c+k}}{k!} \\
  &= \sum_{k=0}^\infty \frac{2^k}{k!} \sum_{\ell=0}^k 2^\ell {k \choose \ell} \sum_{c=2k-\ell}^\infty {c-k-1 \choose k-\ell-1} 2^{-c} \\
  &= \sum_{k=0}^\infty \frac{2^k}{k!} \sum_{\ell=0}^k 2^\ell {k \choose
  \ell} 2^{-2k+\ell} \sum_{j=0}^\infty {j + (k-\ell-1) \choose k-\ell-1}
  2^{-j} \,,
\end{align*}
where we re-parameterized the final sum as $j = c-(2k-\ell)$.  The final
summation is now conveniently in the form of the following power series
identity:
\begin{displaymath}
  \sum_{j=0}^\infty {j+m-1 \choose m-1} z^j = \frac1{(1-z)^m} \,.
\end{displaymath}
Therefore,
\begin{align*}
  C &= \sum_{k=0}^\infty \frac{2^k}{k!} \sum_{\ell=0}^k 2^\ell {k \choose \ell} 2^{-2k+\ell} \cdot 2^{k-\ell}	 \\
  &= \sum_{k=0}^\infty \frac{1}{k!} \sum_{\ell=0}^k 2^{\ell} {k \choose
  \ell}  = \sum_{k=0}^\infty \frac{3^k}{k!} = e^3 \,,
\end{align*}
which implies that $S_1 \sim e^{-2} n^2 C = e n^2$, as claimed.

\subsection{\boldmath Estimating $S_2$} 

Let $a_{c,k}$ be one of the summands in $\eqref{eq:s2}$. Then
\begin{equation}
	\label{eq:ratio}
	\frac{a_{c,k+1}}{a_{c,k}} = \frac{2 (c-k)}{k (k+1)} \cdot \frac{2n-2c+k-1}{n-c-k}.
\end{equation}
Our goal is to simplify the upper bound on $S_2$ by selecting, for each
$c$, the $k$ that maximizes $a_{c,k}$, and then using this maximum in place
of all the terms with that value of $c$.

First consider $k$ such that $k \le \frac12(n-c)$. In this case, the second
factor of \eqref{eq:ratio} is bounded between $1$ and $5$: on the one hand,
$(2n-2c+k-1) - (n-c-k) = n-c+2k-1 \ge 0$, and on the other hand,
$(2n-2c+k-1) - 5(n-c-k) = 6k - 3(n-c) - 1 \le 0$. Therefore we have
\[
  \frac{2(c-k)}{k(k+1)} \le \frac{a_{c,k+1}}{a_{c,k}} \le \frac{10(c-k)}{k(k+1)}.
\]
If $k$ maximizes $a_{c,k}$ and lies in this range, then $2(c-k) \le k(k+1)$
and therefore $k \ge (1 - o(1)) \sqrt{2c}$; on the other hand, $10(c-k-1)
\ge k(k-1)$ and therefore $k \le (1 - o(1)) \sqrt{10c}$. We may safely and
concisely say $\sqrt{c} < k < 4\sqrt{c}$.

On the other hand, if $k \ge \frac12(n-c)$, since $S_2$ only runs $c$ up to
$\frac{9n}{10}$, we have $k \ge \frac12(n-c) \ge n/20$ in the denominator
of~\eqref{eq:ratio}, and we always have $k \le c \le n$ in the numerator.
So,
\[
  \frac{a_{c,k+1}}{a_{c,k}} 
  < 
  \frac{2n}{k^2} \cdot \frac{3n}{n-c-k} 
  \le 
  \frac{2n}{n^2/400} \cdot \frac{3n}{n-c-k} 
  = 
  \frac{2400}{n-c-k} \,,
\]
which is less than 1 as long as $k \leq n-c-2400$.  Therefore the
maximizing $k$ is either in the range found above, or between $n-c-2400$
and $n-c$ (since $k \leq n-c$ always).

Let $S_2'$ be the result of replacing in $S_2$ all terms $a_{c,k}$ by
$a_{c,k^*}$ where $\sqrt{c} < k^*(c) < 4 \sqrt{c}$ is the maximizing
$k$ from the range $0 \leq k \leq n-c-2400$.  Then
\begin{align*}
  S_2' &< \sum_{c=\log n}^{9n/10} c \cdot 2^{k^*} {c-1 \choose k^*-1} {2(n-c-1)+k^* \choose n-c-1, n-c-1, k^*} \frac{n! (n-c-k^*)!}{(2n-c-2)!} \\
  &= \sum_{c=\log n}^{9n/10} \frac{k^* 2^{k^*}}{k^*!} {c \choose k^*} \frac{(n)_{c+1}}{(n-c-1)_{k^*-1} (2n-c-2)_{c-k^*}} \\
  &< \sum_{c=\log n}^{9n/10} k^* \left(\frac{2e}{k^*}\right)^{k^*} \left(\frac{ce}{k^*}\right)^{k^*} \frac{(n)_{c+1}}{(n-c-1)_{k^*-1} (2n-c-2)_{c-k^*}} \\
  &< n^2 \sum_{c=\log n}^{9n/10} k^* (2e^2)^{k^*}  \frac{(n-2)_{c-k^*}}{(2n-c-2)_{c-k^*}} \cdot \frac{(n-c+k^*-2)_{k^*-1}}{(n-c-1)_{k^*-1}}.
\end{align*}
We now eliminate the powers of $n$ in the summand. In the first fraction,
since $c \le \frac{9}{10}n$,  $2n-c \ge \frac{11}{10}n$, and therefore
$\frac{(n-2)_{c-k^*}}{(2n-c-2)_{c-k^*}} \le
\left(\frac{10}{11}\right)^{c-k^*}$.  In the second fraction, since $k^*$
is easily less than $\frac12(n-c)$,  $n-c+k^*-2 < \frac32(n-c-1)$, and
therefore $\frac{(n-c+k^*-2)_{k^*-1}}{(n-c-1)_{k^*-1}} < (3/2)^{k^*}$.
Thus
\begin{align*}
  S_2' 	
  &<
  n^2 \sum_{c = \log n}^{9n/10} 
  k^* \left(
  2e^2 \cdot \frac{11}{10} \cdot \frac{3}{2}
  \right)^{k^*}
  \left(\frac{10}{11}\right)^c \\
  &=
  n^2 \sum_{c = \log n}^{9n/10} 
  k^* \left(\frac{33e^2}{10}\right)^{k^*} \left(\frac{10}{11}\right)^c \\
  &< 
  n^2 \sum_{c = \log n}^\infty 
  4\sqrt{c} \left( 
  \frac{33e^2}{10} \cdot \left(\frac{10}{11}
  \right)^{\sqrt{c}/4}\right)^{4\sqrt{c}}.
\end{align*}
The sum is the tail of a convergent series in $c$, and therefore $S_2' =
o(n^2)$.

To show that $S_2 = o(n^2)$, it remains to consider the terms $a_{c,k}$ for
which $n-c-k \le 2400$, as these are potentially not dominated by
$a_{c,k^*}$. For this case, we consider a second ratio:
\[
  \frac{a_{c,k}}{a_{c+1,k}} 
  = 
  \frac{c-k+1}{c} \cdot
  \frac{(2n-2c+k-2)(2n-2c+k-3)}{(n-c+1)(n-c-1)} \cdot
  \frac{n-c-k}{2n-c-2} \,.
\]
Here, $n-c-k \leq 2400$ and $c-k+1 \le c$; all other factors are
$\Theta(n)$ because $c \leq \frac{9n}{10}$, and so the overall ratio is
$O(n^{-1})$. Therefore, once $n$ surpasses some absolute constant, all of
these $a_{c,k}$ with $n-c-k \leq 2400$ satisfy $a_{c,k} \leq a_{n-k, k}$,
and there are at most $2400 n$ of them.  It remains to control $a_{n-k, k}$
in the range $k \geq n-c-2400 \geq (1-o(1)) \frac{n}{10}$, where we used $c
\leq \frac{9n}{10}$.  For those, we have
\begin{align*}
  a_{n-k, k}
  &<
  2^k \binom{n-k-1}{k-1} {3k-2 \choose k-1,k-1,k} \frac{n!}{(n+k-2)!} \\
  &<
  2^k 2^{n-k-1} 3^{3k-2} \cdot \frac{1}{(n+k-2)_{k-2}} \\
  &<
  \frac{54^n}{(n+k-2)_{k-2}} \\
  &<
  \frac{54^n}{n^{(1-o(1))n/10}} 
  =
  o(1) \,.
\end{align*}
Therefore, the total contribution of these residual $a_{c,k}$ is at most
$2400 n \cdot o(1)$, and $S_2 = S_2' + o(n) = o(n^2)$, as claimed.

\subsection{\boldmath Estimating $S_3$}

Let $d = n-c$ and consider the sum for $1 \le d \le \frac{n}{10}$. Then
from \eqref{eq:s3} we get
\begin{align*}
S_3 &\le \sum_{k=1}^{n/10} \sum_{d=k}^{n/10}  2^k {n-d-1 \choose k-1} {2d+k-2 \choose d-1, d-1, k} \frac{n! (d-k)!}{(n+d-2)!} \\
	&\le \sum_{k=1}^{n/10} \sum_{d=k}^{n/10} \frac{2^k}{(k-1)!} \cdot \frac{(n-d-1)!}{(n-d-k)!} \cdot 3^{2d+k-2} \cdot \frac{n! (d-k)!}{(n+d-2)!} \\
	&\le \sum_{k=1}^{n/10} \sum_{d=k}^{n/10} \frac{6^k \cdot 9^{d-1}}{(k-1)!}\cdot \frac{(n-d-1)_{k-1} (d-k)!}{ (n+d-2)_{d-2}} \\
	&\le \sum_{k=1}^{n/10} \sum_{d=k}^{n/10} \frac{6^k \cdot 9^{d-1} \cdot (d-k)!}{(k-1)!\cdot n^{d-k-1}}. 
\end{align*}
Let $b_{d,k}$ be a term of this sum; since $d-k \le d \le \frac{n}{10}$, 
\begin{displaymath}
  \frac{b_{d,k}}{b_{d-1,k}} 
  = 
  \frac{9(d-k)}{n} \le \frac{9}{10}.
\end{displaymath}
Therefore an upper bound on $S_3$ is:
\[
	S_3	\le \sum_{k=1}^{n/10} b_{k,k} \sum_{d=k}^{n/10} \left(\frac{9}{10}\right)^{d-k}
		\le 10 \sum_{k=1}^\infty \frac{6^k \cdot 9^{k-1}}{(k-1)!\cdot n^{-1}} 
		= 60 e^{54} n = o(n^2) \,,
\]
which completes our proof.

\section*{Acknowledgments}

The authors would like to thank Jian Ding for fruitful discussions which
led to the origin of this project.

\end{document}